\def\@currentlabel{2.1}\label{e:dispaa}
\def\@currentlabel{2.21}\label{e:dispau}
\def\@currentlabel{2.22}\label{e:dispav}
\def\@currentlabel{2.23}\label{e:dispaw}
\def\@currentlabel{2.24}\label{e:dispax}
\def\theequation{\thesection.\@arabic\c@equation}
\definecolor{dullmagenta}{rgb}{0.4,0,0.4}   
\definecolor{darkblue}{rgb}{0,0,0.4}
\newcommand{\R} {\mathbb R}
\newcommand{\cuad}{{\sqcap\kern-.68em\sqcup}}
\newcommand{\ve}{\varepsilon}
\newcommand{\be}{\begin{equation}}
\newcommand{\ee}{\end{equation}}
\newcommand\ud{\, \textnormal{d}}
\newcommand{\eps}{\epsilon}
\DeclareMathOperator{\arcsinh}{arcsinh}
\renewcommand{\theequation}{\thesection.\arabic{equation}}
 \newtheorem{lemma}{Lemma}[section]
\newtheorem{theorem}{Theorem}[section]
\newtheorem{proposition}{Proposition}[section]
\newtheorem{remark}{Remark}[section]
\newcommand{\bremark}{\begin{remark} \em}
\newcommand{\eremark}{\end{remark} }
\begin{document}

\title[Yang-Mills instanton  on a $4+1$ dimensional wormhole]
{Yang-Mills instanton on a four dimensional wormhole: asymptotic stability in the energy space}

\author{Micha{\l} Kowalczyk}
\address{\noindent  M. Kowalczyk - Departamento de
Ingenier\'{\i}a  Matem\'atica and CMM, Universidad de Chile,
Casilla 170 Correo 3, Santiago,
Chile.}
\email{kowalczy@dim.uchile.cl}

\thanks{The first author was partially funded by Chilean research grants FONDECYT 1250156 and ANID project
FB210005. The second author was supported by ANID doctoral fund 21242106.}

\author{Javier Monreal}
\address{\noindent  J. Monreal - Departamento de
Ingenier\'{\i}a  Matem\'atica and CMM, Universidad de Chile,
Casilla 170 Correo 3, Santiago,
Chile.}

\keywords{   }
\subjclass{ 35J25, 35J20, 35B33, 35B40}
\begin{abstract}
In this paper we consider an  $SU(2)$  Yang-Mills field propagating in the  $4+1$ dimensional wormhole spacetime. Assuming the spherically symmetric magnetic ansatz the problem reduces to a one dimensional non linear wave equation. This equation posses a degree one  solution (instanton) which is odd in space. We consider small, odd perturbations of the instanton  and show that it is conditionally asymptotically stable in the odd energy space. 
\end{abstract}
\date{}\maketitle

\section{Introduction}
\setcounter{equation}{0}

\subsection{Yang-Mills field on the four dimensional wormhole}

We  consider an  $SU(2)$  Yang-Mills field propagating in the  $4+1$ dimensional wormhole spacetime $\mathcal M$ with the metric 
\begin{equation}
\label{wormhole 2}
ds^2=-dt^2+dr^2+(r^2+a^2)\left(d\psi^2+\sin^2\psi(d\theta^2+\sin^2\theta d\phi^2)\right).
\end{equation}
In the following we will take $a=1$.
For the $SU(2)$-valued  gauge potential $A_\mu= A_\mu^a\tau_a$ (where the generators $\tau_a$  satisfy the commutation relations $\left[\tau_a, \tau_b\right] = \varepsilon_{abc}\tau_c$) and the curvature $F_{\mu\nu} =\nabla_\mu A_\nu-\nabla_\nu A_\mu +\left[ A_\mu, A_\nu\right]$ the Yang-Mills action is 
\[
S=\int_{\mathcal M} \mathrm{tr}(F_{\alpha\beta} F^{\alpha\beta})
\]
and the corresponding Euler-Lagrange equations are
\begin{equation}
\label{euler-lagrange}
\frac{1}{\sqrt{-g}} \partial_\alpha(\sqrt{-g} F^{\alpha\beta})+\left[ A_\alpha, F^{\alpha\beta}\right]=0.
\end{equation}
We assume the spherically symmetric magnetic ansatz (see \cite{fm} for the derivation)
\[
A=\tau_3(u d\psi+\cos\theta d\phi)+ u\sin\psi(\tau_1 d\theta-\tau_2\sin\theta d\phi)-\cos\psi(\tau_2 d\theta+\tau_1\sin\theta d\phi),
\]
where $u=u(t,r)$. 
For this ansatz the Yang-Mills equations (\ref{euler-lagrange}) reduce to the single semilinear wave equation
\begin{equation}
\label{eq:1}
u_{tt}=u_{rr}+\frac{r}{1+r^2} u_r+\frac{f(u)}{1+r^2}, \quad r\in \R,
\end{equation}
where $f(u)=2 u(1-u^2)$.  

We change variables $x=\arcsinh r$. 
The Yang-Mills equation takes the form  
\begin{equation}
\label{eq:ym in x}
\cosh^2x\, u_{tt}=u_{xx}+f(u).
\end{equation}
The associated conserved energy reads
\[
E(u)=\frac{1}{2}\int \cosh^2 x (\partial_tu)^2+(\partial_x u)^2+ (1-u^2)^2\,\ud x.
\]
Finite energy requires that at both asymptotic ends the fields tend to the vacua:  $u = \pm 1$. In the topologically trivial sector, where $u(\infty) = u(-\infty)$, the energy attains the minimum $E = 0$ at the vacua. For the degree-one solutions, where  $u(\infty) = 1 = -u(-\infty)$, we have the Bogomolnyi inequality
\[
E(u)\geq \frac{1}{2} \int \left(\partial_x u-(1-u^2)\right)^2\,\ud x+\frac{4}{3}.
\]
We note  that the equation
\[
u_{xx}+f(u)=0,
\]
possesses a kink solution $H(x)=\tanh x$ for which $E(H)=\frac{4}{3}$. More generally for any $c\in \R$ the translated kink $H_c(x)=H(x-c)$ is also a solution and $E(H_c)=\frac{4}{3}$. We denote the one-dimensional kink orbit by
\[
\mathcal O_H=\{H_c\mid c\in \R\}.
\]
We put forward the following conjecture regarding the evolution equation (\ref{eq:ym in x}) (stated somewhat informally):

{\bf{Conjecture.}} {\it The orbit  $\mathcal O_H$ is conditionally asymptotically stable in the energy space.}

This conjecture means, roughly speaking, that in general two scenarios are possible for any initial data of the form $u(0)= H_{c_0}+v(0)$:
\begin{enumerate}
\item Later in time $c(t)\to c_\infty\in \R$ and 
\begin{equation}
\label{energy limit}
\lim_{t\to \infty}\int_{-R+c(t)}^{R+c(t)} [v^2(t,r)+v^2_r(t,r)+v^2_t(t,r)]\, dr=0
\end{equation}
for any fixed positive $R$. 
\item The limiting speed is infinite and still we have (\ref{energy limit}). 
\end{enumerate}

In this paper we make the first step towards proving this conjecture assuming that initially $c_0=0$ and that the initial data $(v(0), v_t(0))$ is an odd function of $r\in \R$. This implies immediately that the kink does not move and  $c_\infty=0$. 
Our approach in showing (\ref{energy limit}) follows that of \cite{KMM4, KMMV, KM1} and is based on virial identities. The novelty here is the fact that for the Schr\"odinger operator associated with the linearization around the soliton in variable $r$ is only algebraically decaying and the estimates coming form the virial identities have to be suitably adopted. One of our goals  is to show in this paper the flexibility and wide possible range of application of the virial method.  

One of the most interesting open problems in the theory of nonlinear dispersive equations is the soliton resolution conjecture which asserts, roughly speaking, that generic solutions resolve asymptotically into a coherent structure (typically a collection of solitons) and outgoing radiation.
It was  pointed out in \cite{bk} that the wormhole geometry provides an attractive setting for studying this conjecture. 
In \cite{bk} the soliton resolution conjecture was formulated and verified numerically for equivariant wave maps from the wormhole  into the $3$-sphere. In addition, the rate of convergence to the soliton (which in this case is a harmonic map from a $t = const$ hypersurface of the wormhole into the $3$-sphere) was computed by perturbation methods. Subsequently, the conjecture made in \cite{bk}  was proved  by Rodriguez \cite{rodr 1, rodr 2} via the concentration-compactness method as in \cite{dkm}. For the radially symmetric harmonic maps the solition resolution conjecture in any odd space dimension was proven in \cite{dkm 1}. For the $(2+1)$-dimensional case  the asymptotic behavior of finite-energy solutions as a superposition of asymptotically decoupled harmonic maps and free radiation was described in \cite{Jendrej2+1}.
Regarding studies on wormhole geometries, for problems of similar structure, numerical and rigorous investigations have shown the existence of static chains of alternating kink--antikink solutions that serve as asymptotically stable configurations for the equivariant wave maps from $(2+1)$-dimensional wormhole to $2$ sphere \cite{Jendrej1} (c.f. \cite{{gong_jendrej}, {jkl},  {Jendrej1+1},{JendLaw}}).

Acknowledgement: We would like to thank Piotr Bizo\'n for suggesting to us the problem and for several enlightening discussion during the preparation of the manuscript.  
\section{Statements of the main result}
Considering the previous discussion, we have the following problem
\begin{equation}
\label{eq: in r}
u_{tt}=u_{rr}+\frac{r}{1+r^2} u_r + \frac{f(u)}{1+r^2}, \quad r\in \R, \quad f(u)=2(1-u^2) u.
\end{equation}
We recall that there is a stationary solution of this equation (the kink) 
\[
\begin{aligned}
H(r)&=\tanh(\arcsinh(r))=\frac{r}{\sqrt{1+r^2}},\\
H' (r)&=\frac{1}{(1+r^2)^{3/2}}.
\end{aligned}
\]
We set $u_1=u$, $u_2=u_t$. Expressed in variable $x=\arcsinh r$ 
the energy  is 
\[
E(u_1, u_2)=\frac{1}{2} \int \cosh^2 x\, u_2^2+u_{1,x}^2+W(u_1), 
\quad W(u_1)= (1-u_1^2)^2.
\]
Now let us set $u_1=H+v_1$, $u_2=v_2$ with $v_1, v_2$ odd functions. Then 
\[
\begin{aligned}
E(u_1, u_2)&=\frac{1}{2} \int \cosh^2 x v_2^2+\frac{1}{2}\int (H_{x}+v_{1,x})^2+ W(H+v_1)\\
&=\frac{1}{2} \int \cosh^2 x v_2^2+\frac{1}{2}\int H_{x}^2+W(H)+\int H_{x} v_{1,x} +\frac{1}{2} W'(H) v_1\\
&\quad +\frac{1}{2} \int v_{1,x}^2+\frac{1}{2} W''(H) v_1^2+\frac{1}{2}\int  \left[W(H+v_1)-W(H)-W'(H) v_1-\frac{1}{2}W''(H) v_1^2\right].
\end{aligned}
\]
Denote
\[
B(v_1)=\int v_{1,x}^2+\frac{1}{2} W''(H) v_1^2.
\]
Note that assuming $v_1$ odd we have
\begin{equation}
\label{cond: orto}
\int v_1 H'\,\ud x=0
\end{equation}
and then there exists a constant $\mu>0$ such that 
\[
B(v_1)\geq \mu \|v_1\|_{H^1}^2.
\]
From this it is easy to show
\begin{lemma}\label{lem: stab}
Under the orthogonality condition (\ref{cond: orto}) and supposing  that the initial condition satisfies
\[
\|v_1(0)\|_{H^1}^2+\int \cosh^2x\, v^2_2(0)<\delta
\]
we have 
\[
\|v_1(t)\|_{H^1}^2+\int \cosh^2x\, v^2_2(t)\lesssim \delta
\]
\end{lemma}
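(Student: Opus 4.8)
The plan is to run the classical energy/Lyapunov stability argument, using the coercivity of $B$ on odd functions together with conservation of the energy $E$, and to close with a soft continuity (bootstrap) argument. One preliminary remark: oddness is propagated by the flow. If $u(t,\cdot)$ is odd in $r$, then so is the right-hand side of (\ref{eq: in r}), since the coefficients $r/(1+r^2)$ and $f(u)/(1+r^2)$ map odd functions to odd functions; hence if $(v_1(0),v_2(0))$ is odd then $(v_1(t),v_2(t))$ remains odd on the interval of existence, the orthogonality condition (\ref{cond: orto}) persists (as $H'$ is even), and the coercivity $B(v_1(t))\geq\mu\|v_1(t)\|_{H^1}^2$ is available at every time. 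Equivalently, the kink does not move and $c(t)\equiv0$.

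First I would record the exact energy expansion. Using $H_{xx}+f(H)=0$ together with $f=-\frac{1}{2}W'$, the terms linear in $v_1$ in the expansion displayed in the text cancel, leaving
\[
E(H+v_1,v_2)=E(H,0)+\frac{1}{2}\int\cosh^2 x\,v_2^2+\frac{1}{2}B(v_1)+R(v_1),
\]
\[
R(v_1)=\frac{1}{2}\int\left[W(H+v_1)-W(H)-W'(H)v_1-\frac{1}{2}W''(H)v_1^2\right].
\]
Since $W$ is a quartic polynomial, $R(v_1)$ is exactly the sum of a cubic and a quartic term in $v_1$; by the one-dimensional Sobolev embedding $H^1(\R)\hookrightarrow L^\infty(\R)$ and $\|H\|_{L^\infty}=1$ one gets $|R(v_1)|\lesssim\|v_1\|_{H^1}^3+\|v_1\|_{H^1}^4$. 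One also has, trivially, $B(v_1)\lesssim\|v_1\|_{H^1}^2$ because $W''(H)$ is bounded.

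Next, set $N(t)=\|v_1(t)\|_{H^1}^2+\int\cosh^2 x\,v_2^2(t)$. Evaluating the identity above at times $t$ and $0$ and using conservation of $E$ gives
\[
\frac{1}{2}\int\cosh^2 x\,v_2^2(t)+\frac{1}{2}B(v_1(t))=\frac{1}{2}\int\cosh^2 x\,v_2^2(0)+\frac{1}{2}B(v_1(0))+R(v_1(0))-R(v_1(t)).
\]
Bounding the left side below by $c\,N(t)$ via coercivity (with $c=\frac{1}{2}\min(1,\mu)$), and the first three terms on the right by $\lesssim\delta$ (using $B(v_1(0))\lesssim\|v_1(0)\|_{H^1}^2$, the remainder bound, $N(0)<\delta$, and $\delta\leq1$), one obtains
\[
c\,N(t)\leq C\delta+C\left(N(t)^{3/2}+N(t)^2\right)
\]
on the maximal interval of existence, with $c,C>0$ independent of $\delta$. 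Now invoke local well-posedness of (\ref{eq: in r}) in the odd energy space, so $t\mapsto N(t)$ is continuous with $N(0)<\delta$; on the open, nonempty set where $N(t)\leq K\delta$, with $K:=4C/c$ and $\delta$ small enough that $C((K\delta)^{1/2}+K\delta)\leq c/2$, the displayed inequality forces $N(t)\leq(2C/c)\delta<K\delta$. Hence that set is also closed, so it equals the whole interval of existence; this gives $N(t)\lesssim\delta$ for all such $t$, and, incidentally, global existence.

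The argument is essentially routine. The only points requiring care are that the cubic term $\int Hv_1^3$ and the quartic term $\int v_1^4$ in $R(v_1)$ be estimated purely in terms of $\|v_1\|_{H^1}$ via the $L^\infty$ embedding, so that for small $N$ they are genuinely higher order and can be absorbed into $c\,N(t)$ in the bootstrap, and that the choices of $K$ and of the smallness of $\delta$ be made in the right order for the continuity argument to close.
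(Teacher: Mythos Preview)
Your argument is correct and is precisely the standard energy/coercivity bootstrap the paper has in mind; the paper itself does not give a proof and simply states that the lemma is ``easy to show'' from the coercivity inequality $B(v_1)\geq\mu\|v_1\|_{H^1}^2$. Your write-up of the energy expansion, the cubic--quartic remainder bound via $H^1\hookrightarrow L^\infty$, and the continuity argument fills this in accurately.
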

Going back to the variable $r= \sinh x$, by Lemma \ref{lem: stab} we have
\[
\int \left[v_{1,r}^2(t)+v_2^2(t)+\frac{v_1^2(t)}{1+r^2}\right] (1+r^2)^{1/2}\,\ud r\lesssim \delta,
\]
for any $t>0$ provided that 
\[
\int \left[v_2^2(0)+v_{1,r}^2(0)+\frac{v_1^2(0)}{1+r^2}\right] (1+r^2)^{1/2}\,\ud r\leq \delta.
\]
%

\subsection{Odd perturbations of a kink for the Yang-Mills  in the energy space}

We consider the following problem
\begin{equation}
\label{eq: v in r}
\begin{aligned}
\dot v_1&=v_2,\\
\dot v_2&= v_{1,rr}+\frac{r}{1+r^2} v_{1,r}+\frac{f'(H)}{1+r^2}+\frac{N(v_1)}{1+r^2},
\end{aligned}
\end{equation}
where
\[
N(v_1)=f(H+v_1)-f(H)-f'(H) v_1=-6 H v_1^2- v_1^3,
\]
and we assume that  $v_j$ are odd functions of $r$.

Our main result is 
\begin{theorem}\label{thm:one}
There exists $\delta>0$ such that if the initial data $(v_1(0), v_2(0))$ are odd functions satisfying 
\[
\int \left(v^2_2(0)+v^2_{1,r}(0)+\frac{v^2_1(0)}{1+r^2}\right)(1+r^2)^{1/2}\, d r<\delta,
\]
then for any $R>0$   
\[
\lim_{t\to \infty} \int_{-R}^R \left(v^2_2(t)+v^2_{1,r}(t)+v^2_1(t)\right)\, dr=0,
\]
and 
\[
\lim_{t\to \infty} \int \frac{\min\{1, r^{-1}\}}{(1+r^2)^{1/2}} v^2_1(t)=0.
\]
\end{theorem}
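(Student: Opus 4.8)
The plan is to run a virial argument in the spirit of \cite{KMM4, KMMV, KM1}, adapted to the fact that the potential in the linearized Schrödinger operator decays only algebraically in $r$. First I would pass to the variable $x=\arcsinh r$, where \eqref{eq: v in r} becomes $\cosh^2 x\, v_{1,tt}=v_{1,xx}+f'(H)v_1+N(v_1)$ with $H(x)=\tanh x$, and set up the linearized operator $\mathcal L=-\partial_x^2 - f'(H) = -\partial_x^2 -2(1-3\tanh^2 x)$. On the odd subspace, the orthogonality condition \eqref{cond: orto} kills the resonance/zero-mode direction $H'=\operatorname{sech}^2 x$, so by the quadratic-form coercivity already recorded before Lemma \ref{lem: stab} we have $\langle \mathcal L v_1,v_1\rangle \gtrsim \|v_1\|_{H^1}^2$ on odd functions — this provides the baseline energy control. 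Lemma \ref{lem: stab} plus the change-of-variables estimate at the end of Section 2 gives the global-in-time smallness $\|v_1(t)\|_{H^1(dx)}^2+\int\cosh^2 x\,v_2^2(t)\,dx\lesssim\delta$, which I would use freely.

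The core is to construct a virial functional. I would try the standard form
\[
\mathcal I(t)=\int \cosh^2 x\, v_2\left(\phi_A\, v_{1,x}+\tfrac12 \phi_A'\, v_1\right)\ud x,
\]
where $\phi_A(x)=A\,\varphi(x/A)$ with $\varphi$ a smooth bounded odd-type weight ($\varphi(y)\sim y$ near $0$, $\varphi$ bounded, $\varphi'>0$), $A\gg1$ a parameter. Differentiating in time and using the equation, the bulk term should produce, modulo controlled errors, a positive quadratic form
\[
-\frac{d}{dt}\mathcal I \gtrsim \int \phi_A' \left(v_{1,x}^2 + c_0\, v_1^2\,\operatorname{sech}^2 x\right)\ud x + (\text{flux-type positive terms in } v_2),
\]
i.e. a localized version of the coercivity of $\mathcal L$, at the price of: (i) terms from the weight $\cosh^2 x$ (the conformal factor) hitting the time derivatives, which is the genuinely new feature — here the factor $\cosh^2 x\to\infty$ so the "mass" term for $v_2$ is large, and one must check the cross terms in $\frac{d}{dt}\mathcal I$ coming from $\partial_x(\cosh^2 x)$ do not destroy positivity; (ii) the cubic nonlinearity $N(v_1)=-6Hv_1^2-v_1^3$, whose contribution is cubic and hence absorbed for $\delta$ small using the $H^1$ bound and a Sobolev/Gagliardo–Nirenberg estimate; (iii) boundary terms at $|x|=A$, handled by choosing $A$ large and using that the energy density is integrable. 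After integrating in time and using $|\mathcal I(t)|\lesssim A\cdot(\text{energy})\lesssim A\delta$ uniformly, one concludes $\int_0^\infty \int \phi_A'(v_{1,x}^2+v_1^2\operatorname{sech}^2 x)\,\ud x\,\ud t<\infty$, hence a sequence $t_n\to\infty$ along which the localized integral $\to0$. A second, separate virial estimate (or a repetition with a shifted/refined weight, as in the "rigidity" step of the references) upgrades convergence along a sequence to convergence of the full limit, using again coercivity to close. Translating back to $r$ via $\operatorname{sech}^2 x = (1+r^2)^{-1}$ and $\ud x=(1+r^2)^{-1/2}\ud r$ yields the two stated limits: the local $L^2$ statement on $[-R,R]$ and the weighted global statement with weight $\min\{1,r^{-1}\}(1+r^2)^{-1/2}$ (the extra $\min\{1,r^{-1}\}\sim \phi_A'$-type decay at infinity coming from the large-$x$ behavior of the virial weight, which cannot be taken constant because $\mathcal I$ must stay bounded).

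I expect the main obstacle to be exactly what the introduction flags: because the potential $f'(H)/(1+r^2)$ and the associated ground-state-type profile decay only polynomially (unlike the exponential decay in the classical $\phi^4$/wave-map virial arguments), the weight $\phi_A$ cannot be bootstrapped to a bounded cutoff while retaining a pointwise-positive lower bound with a clean exponential localization; instead one gets only the algebraically weighted control above, and the cross terms generated by the conformal factor $\cosh^2 x$ must be shown to be of lower order or sign-definite. Concretely, the delicate computation will be verifying that
\[
\frac{d}{dt}\int \cosh^2 x\, v_2\left(\phi_A v_{1,x}+\tfrac12\phi_A' v_1\right)\ud x
\]
contains no uncontrolled term proportional to $\int (\cosh^2 x)'\, v_2\,(\cdots)$, which will force a judicious choice of the lower-order correction inside the parentheses (possibly replacing $\tfrac12\phi_A' v_1$ by $\tfrac12(\phi_A' + \text{correction involving }(\log\cosh x)')\,v_1$) so that the $v_2^2$-terms assemble with the right sign. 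Once that algebra is arranged, absorbing the cubic term and the boundary terms is routine given Lemma \ref{lem: stab}.
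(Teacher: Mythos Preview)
Your plan has a genuine gap at the core step. You assert that the virial derivative ``should produce \dots a localized version of the coercivity of $\mathcal L$'', but this is not what a virial computation yields. Differentiating your $\mathcal I$ (or the paper's analogue) produces a quadratic form whose potential term is $-\tfrac12\int \phi_A\, V'\, v_1^2$, involving the \emph{derivative} of the potential, not $V$ itself; the coercivity of $\mathcal L=-\partial_x^2+V$ on odd functions is simply irrelevant to the sign of this expression. In the problem at hand $V'$ (equivalently the $r$-variable potential in \eqref{eq:V0}) changes sign, so the raw virial inequality carries an unavoidable local error $\int_{|x|\le K} v_1^2$ on the wrong side (this is exactly \eqref{est:virial1} in the paper). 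Nothing in your outline closes this: your ``second, separate virial estimate'' is described only as upgrading subsequence convergence to full convergence, not as absorbing a local $L^2$ error, and a repetition with a shifted weight cannot help because the obstruction is pointwise ($V'$ has the wrong sign near $0$), not a matter of scale.

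The paper's mechanism for closing this is precisely the step you omit: the Darboux factorization $\tilde L=U^\ast U$, $\tilde L_1=UU^\ast$, for which the conjugated potential $P_1$ satisfies the repulsivity condition $-rP_1'\ge 0$ \emph{everywhere} (see \eqref{cond:repuls}). A third virial identity is then run on the transformed, $\eps$-regularized variables $u_j=X_\eps U w_j$ (Section~6), and this is what controls the local term $\|\rho_K w_1\|_{L^2}^2$ and allows the combination of the three virials to close (Section~7). Incidentally, the difficulty you flag as ``main'' --- the cross terms generated by the conformal factor $\cosh^2 x$ --- is not where the work is: the paper removes the weight at the outset by the conjugation $w_j=(1+r^2)^{1/4}v_j$ (equivalently $w_j=\cosh^{1/2}x\,v_j$), which turns \eqref{eq: v in r} into the unweighted system \eqref{eq: w}, so no correction of the form ``$(\log\cosh x)'$'' inside the virial is needed.
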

In order to prove our result we further reformulate the problem. Denote
\[
L v=-v_{rr}-\frac{r}{1+r^2} v_{r}-\frac{f'(H)}{1+r^2} v,
\]
and define
\[
\tilde L \phi=(1+r^2)^{1/4}L\left(\frac{\phi}{(1+r^2)^{1/4}}\right).
\]
Setting
\[
w_j= (1+r^2)^{1/4}v_j, \quad j=1,2, 
\]
we get
\begin{equation}
\label{eq: w}
\begin{aligned}
\dot w_1&=w_2,\\
\dot w_2&=-\tilde L w_1+N(w_1).
\end{aligned}
\end{equation}
Explicitly
\[
\tilde L=-\partial_{rr} + V,
\]
with 
\begin{equation}
\label{eq:V0}
V=\frac{2-r^2}{4(1+r^2)^2}-\frac{f'(H)}{1+r^2}=\frac{3}{4}\frac{5r^2-2}{(1+r^2)^2},
\end{equation}
and 
\[
N(w_1)=(1+r^2)^{-3/4}N\left(\frac{w_1}{(1+r^2)^{1/4}}\right)=-\frac{6 H w_1^2}{(1+r^2)^{5/4}}-\frac{2w_1^3}{(1+r^2)^{3/2}}.
\]

Theorem \ref{thm:one} is a direct corollary from: 
\begin{theorem}\label{thm:2}
Suppose that the initial data $(w_1(0,r), w_2(0,r))$ of (\ref{eq: w}) are odd functions of $r$. There exists $\delta>0$ such that if
\[
\norm{\frac{w_1(0)}{(1+r^2)^{1/2}}}_{L^2}+\norm{{w_{1,r}}(0)}_{L^2}+\norm{w_2(0)}_{L^2} <\delta,
\] 
then for any $R>0$ 
\[
\lim_{t\to \infty}\left(\int_{-R}^R |w_2(t)|^2+|\partial_r w_1(t)|^2 + |w_1(t)|^2\right)=0,
\]
and also 
\[
\lim_{t\to \infty} \int \frac{{\min\{1,r^{-1}\}}}{1+r^2} |w_1(t)|^2=0.
\]
\end{theorem}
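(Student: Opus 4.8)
The plan is to prove Theorem~\ref{thm:2} by the virial method of \cite{KMM4, KMMV, KM1}, adapted to the algebraically decaying potential $V$ in \eqref{eq:V0}. First I would record the spectral picture for $\wt L=-\partial_{rr}+V$ acting on odd functions: the kernel of $L$ on the full line is spanned by $H'(r)=(1+r^2)^{-3/2}$, so $\wt L$ annihilates $(1+r^2)^{1/4}H'=(1+r^2)^{-5/4}$, which is \emph{even}; hence on the odd sector $\wt L$ has no kernel, no negative eigenvalue (by the same Sturm-oscillation/positivity argument underlying $B(v_1)\ge \mu\|v_1\|_{H^1}^2$), and a coercivity estimate $\langle \wt L w, w\rangle \gtrsim \|w_r\|_{L^2}^2 + \|(1+r^2)^{-1/2}w\|_{L^2}^2$ holds on odd $H^1$. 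This coercivity, together with the conserved energy expansion already carried out in the excerpt, gives global existence and the uniform-in-time smallness $\|w_1(t)/(1+r^2)^{1/2}\|_{L^2}+\|w_{1,r}(t)\|_{L^2}+\|w_2(t)\|_{L^2}\lesssim\delta$.

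Next I would introduce two virial-type functionals. The first is a localized momentum/center functional of the form $I(t)=\int \psi(r/A)\, w_2\, \partial_r w_1$ (or the symmetrized version $\tfrac12\int \psi_A'(r)\,\partial_r(w_1^2)$ built from a smooth bounded weight $\psi_A$ with $\psi_A'\ge 0$ compactly supported on scale $A$), whose time derivative produces, after using \eqref{eq: w}, a positive quadratic form in $(w_2,\partial_r w_1)$ on a ball of radius $\sim A$ plus error terms from $V'$ and from $N$. Because $V$ is only algebraically decaying, the potential-derivative term $\int \psi_A V' w_1^2$ is not exponentially small; here one uses the precise form $V'=O(r^{-3})$ for large $r$ and the weighted bound $\|(1+r^2)^{-1/2}w_1\|_{L^2}\lesssim\delta$ to absorb it, at the cost of a slowly decaying (rather than exponentially decaying) tail. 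The second functional is a ``repulsivity'' virial $J(t)=\int \phi_A(r)\, w_1\, w_2 + \tfrac14\int \phi_A'(r)\,\partial_r(w_1^2)$ tuned so that $\tfrac{d}{dt}J$ controls $\int \phi_A'|\partial_r w_1|^2 + \int\big(-\tfrac12\phi_A'''+\phi_A' V - \tfrac12\phi_A V'\big)w_1^2$ up to nonlinear and boundary errors; choosing $\phi_A$ adapted to the sign of $V$ (negative near $r=0$, positive for $|r|>\sqrt{2/5}$) one extracts a coercive lower bound $\gtrsim \int \frac{\min\{1,|r|^{-1}\}}{1+r^2}|w_1|^2$ on the exterior, matching exactly the weight in the second conclusion of the theorem.

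From the two differential inequalities one runs the standard argument: since $I(t)$ and $J(t)$ are bounded uniformly in $t$ (by the uniform energy smallness), integrating $\tfrac{d}{dt}I$ and $\tfrac{d}{dt}J$ in time forces the coercive right-hand sides to be integrable on $[0,\infty)$, hence to vanish along a sequence $t_n\to\infty$; a further argument (uniform continuity of these quantities in $t$, using the equation to bound their time derivatives, or a sliding-window/no-return argument as in \cite{KMMV}) upgrades this to a genuine limit, giving $\lim_{t\to\infty}\int \frac{\min\{1,|r|^{-1}\}}{1+r^2}|w_1|^2=0$ and $\lim_{t\to\infty}\int_{|r|<A}(|w_2|^2+|\partial_r w_1|^2)=0$ for each fixed $A$; finally one controls $\int_{|r|<R}|w_1|^2$ by combining the interior decay of $\partial_r w_1$ with the global weighted decay of $w_1$ and the odd boundary condition $w_1(0)=0$ (Poincaré on $[-R,R]$). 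The main obstacle, as flagged in the introduction, is the slow algebraic decay of $V$ and $V'$: the virial error terms that would be exponentially small in the flat-kink problem are here only polynomially small, so the weights $\psi_A,\phi_A$ must be chosen with matching polynomial profiles and the $A\to\infty$ limit must be taken carefully so that the ``good'' coercive term dominates the ``bad'' tail uniformly in time — this is precisely the place where the weighted norm $\|(1+r^2)^{-1/2}w_1\|_{L^2}$ (rather than an unweighted $L^2$ norm) is essential and where the argument departs from \cite{KMM4, KMMV, KM1}.
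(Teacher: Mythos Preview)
Your proposal has a genuine gap: you never invoke the Darboux transformation, which the paper identifies as ``the heart of our method'' and which is essential to close the virial argument. The potential $V$ in \eqref{eq:V0} is \emph{not} repulsive: one computes $-rV'(r)=\frac{15r^2}{2(1+r^2)^3}\bigl(r^2-\tfrac{9}{5}\bigr)$, which is strictly negative for $0<|r|<\sqrt{9/5}$. Hence in your first virial (and in the paper's $\mathcal I$) the term $\tfrac12\int\Phi_A V' w_1^2$ has the wrong sign near the origin and produces an uncontrolled error $\sim\int_{|r|\le K} w_1^2$. Your second functional is supposed to supply the missing coercivity, but the claim that one can ``choose $\phi_A$ adapted to the sign of $V$'' to make $-\tfrac12\phi_A V'+\phi_A' V-\tfrac12\phi_A'''$ coercive is not substantiated; the obstruction is the sign of $-rV'$, not of $V$, and no choice of monotone bounded weight $\phi_A$ makes $-\phi_A V'$ nonnegative on all of $\R$.

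The paper closes this gap by factoring $\wt L=U^\ast U$ with $U=\wt Y\,\partial_r\,\wt Y^{-1}$, $\wt Y=(1+r^2)^{-5/4}$, and passing to the conjugated operator $\wt L_1=UU^\ast=-\partial_{rr}+P_1$ with $P_1=\tfrac34\frac{2+r^2}{(1+r^2)^2}$, for which the repulsivity $-rP_1'\ge 0$ \emph{does} hold everywhere. A third virial $\mathcal J$ is then run on the transformed variables $u_j=S_\eps w_j$, where $S_\eps=X_\eps U$ and $X_\eps=(1-\eps\partial_{rr})^{-1}$ is a regularization needed because $Uw_1\in L^2$ only; a transfer lemma (Lemma~\ref{lem:transfer}) converts the resulting control of $u_1$ back into control of $\|\rho_K^2\sigma_B^2 w_1\|_{L^2}^2$, which is exactly the missing local term. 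Without this mechanism (or some explicit replacement for it), your two-virial scheme on $w$ alone does not close.
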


The heart of our method is the use of the Darboux transformation as in \cite{KMM4, KM1} to transform (\ref{eq: w}) into a problem whose potential is "repulsive". We will check this condition now for the problem at hand.  We set 
\[
\tilde Y=(1+r^2)^{1/4}  H'.
\]
We know 
\[
\tilde L \tilde Y=0.
\]
We define the operators
\[
U= \tilde Y\cdot\partial_r\cdot  \tilde Y^{-1}, \quad U^\ast= -\tilde Y^{-1}\cdot \partial_r\cdot \tilde Y.
\]
Then
\[
\tilde L= U^\ast U,
\]
and if we let
\[
\tilde L_1=U U^\ast,
\]
then
\[
U\tilde L=\tilde L_1 U.
\]
Explicitly
\[
\tilde L_1=-\partial_{rr}-\tilde Y \partial_r\left(\tilde Y^{-2} \partial_r \tilde Y\right).
\]
We see that applying $U$ to the equation leads possibly to a better problem, meaning that $\tilde L_1$ does not have a kernel and is a positive operator. Indeed, 
we have $H'(r)=(1+r^2)^{-3/2}$. As a consequence $\tilde Y(r)=(1+r^2)^{-3/4}$. Then
\[
P_1=-\tilde Y \partial_r\left(\tilde Y^{-2} \partial_r \tilde Y\right)=\frac{3}{4}\frac{2+r^2}{(1+r^2)^2}.
\]
As in  \cite{KM1} we get the following criterion for asymptotic stability
\[
-rP_1'\geq 0.
\]
We check directly
\begin{equation}
\label{cond:repuls}
-rP_1'=\frac{3}{2} r^2\frac{3+r^2}{(1+r^2)^3}\geq 0.
\end{equation}
As we have mentioned above the fact that the potential $P_1$ is only algebraically decaying (as opposed to exponential decay) introduces new elements  in the  application of  our method. 

\section{Preliminaries}

\subsection{General  virial calculation}

Let $\Phi:\R\to\R$ be a smooth, odd, strictly increasing, bounded function and let $\boldsymbol a=(a_1, a_2)\in H^1\times L^2$ be a  solution of 
\[
\begin{cases}
\dot a_1= a_2 \\
\dot a_2=-L a_1+G
\end{cases}
\] 
where $L=-\partial_{rr}+ P
$
and $G=G(t,r)$, $P=P(r)$ are given functions.
Define
\begin{equation*}
\mathcal A=\int\left(\Phi \partial_r a_1+\frac{1}{2}\Phi' a_1\right)a_2 \,.
\end{equation*}
We compute formally
\begin{equation}\label{eq:vir0}
\dot{\mathcal A}=-\int\Phi' (\partial_r a_1)^2+ \frac{1}{4}\int \Phi''' a_1^2 +\frac{1}{2} \int \Phi P'a_1^2+\int \left(\Phi \partial_r F+\frac{1}{2}\Phi' F\right) a_2
+\int G\left(\Phi \partial_r a_1+\frac{1}{2}\Phi' a_1\right).
\end{equation}
Changing variables
\[
\tilde a_1=\zeta a_1, \qquad \zeta=\sqrt{\Phi'}\, ,
\]
we recall the alternative identity
\begin{equation}
\label{eq:vir1}
\begin{aligned}
\dot{\mathcal A} &=-\int (\partial_r \tilde a_1)^2-\frac{1}{2} \int \left(\frac{\zeta''}{\zeta} - \frac{(\zeta')^2}{\zeta^2}\right) \tilde a_1^2
+\frac 12 \int \frac{\Phi P'}{\zeta^2} \tilde a_1^2+\int G\left(\Phi \partial_r a_1+\frac{1}{2}\Phi' a_1\right).
\end{aligned}
\end{equation}
We refer to \cite[Proof of Proposition 1]{KMM4} for detailed computations.

\subsection{Weight functions for  virial identities}

We consider a smooth even function $\chi:\R\to \R$ satisfying
\begin{equation*}
\mbox{$\chi=1$ on $[-1,1]$, $\chi=0$ on $(-\infty,-2]\cup[2,+\infty)$,
$\chi'\leq 0$ on $[0,+\infty)$.}
\end{equation*}
The constants $A$ and $B$ will be fixed later such that
$1\ll B \ll A$.
We define the following functions on $\R$
\begin{equation*}
\zeta_A(r)=\exp\left(-\frac{1}{A}\left(1-\chi(r)\right)|r|\right), \quad \Phi_A(r)=\int_0^r \zeta_A^2(y)\,\ud y,
\end{equation*}
and
\begin{align}
&\zeta_B(r)=\exp\left(-\frac{1}{B}\left(1-\chi(r)\right)|r|\right), \quad \Phi_B(r)=\int_0^r \zeta_B^2(y)\,\ud y,\nonumber\\
&\Psi_{A,B}(r)=\chi_A^2(r)\Phi_B(r)\quad\mbox{where}\quad
\chi_A(r)=\chi\left(\frac{r}{A}\right).\label{def:chiB}
\end{align}
We also set
\[
\sigma_A(r) = \sech\left( \frac rA \right).
\]
Note that  $\sigma^2_A\lesssim \zeta_A^2\lesssim \sigma^2_A$. 
Finally, we let 
\[
\rho_K(r)=\sech\left(\frac{r}{K}\right)
\]
with some $K>0$ to be fixed later. 

\section{Virial estimates at large scale}
\setcounter{equation}{0}

\subsection{First virial estimate}

Our goal is to show the following
\begin{proposition}
For odd perturbations $(w_1, w_2)$ and any $T>0$ we have:
\[
\int_0^T \| \sigma_A \partial_r w_1\|_{L^2}^2+\frac{1}{A^2}\|\sigma_A w_1\|_{L^2}^2+\frac{1}{A^2} \| \sigma_A w_2\|_{L^2}^2 \lesssim A\delta^2+ \int_0^T \| \rho^2_K w_1\|_{L^2}^2.
\]
\end{proposition}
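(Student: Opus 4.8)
The plan is to run the general virial computation of Section~3.1 with the slowly growing weight $\Phi_A$, exploiting the large‑$r$ repulsivity of $V$ (the $P_1$‑analogue of which is recorded in (\ref{cond:repuls})), and to recover the $w_2$–part from a second, auxiliary functional; the core $\{|r|\lesssim 1\}$ is thrown into the term $\int_0^T\|\rho_K^2 w_1\|_{L^2}^2$, to be treated at small scale. Concretely, I apply (\ref{eq:vir1}) to $\boldsymbol a=(w_1,w_2)$ with $L=\tilde L=-\partial_{rr}+V$, $P=V$, $G=N(w_1)$ and $\Phi=\Phi_A$, so that $\zeta=\zeta_A=\sqrt{\Phi_A'}$, and put
\[
\mathcal A(t)=\int\Big(\Phi_A\,\partial_r w_1+\tfrac12\Phi_A'\,w_1\Big)w_2 .
\]
Since $0\le\Phi_A\lesssim A$ and $\|\zeta_A^2(1+r^2)^{1/2}\|_{L^\infty}\lesssim A$, the propagated smallness (the analogue of Lemma~\ref{lem: stab}, namely $\|w_{1,r}\|_{L^2}+\|(1+r^2)^{-1/2}w_1\|_{L^2}+\|w_2\|_{L^2}\lesssim\delta$ for all $t$) yields $|\mathcal A(t)|\lesssim A\delta^2$, hence $|\int_0^T\dot{\mathcal A}|\lesssim A\delta^2$. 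In (\ref{eq:vir1}) the first two terms combine, after one integration by parts, into $-\|\zeta_A\partial_r w_1\|_{L^2}^2+\frac1{A^2}\int_{|r|\ge2}\zeta_A^2 w_1^2$ up to an error supported in $1\le|r|\le2$; for the potential term one reads off from (\ref{eq:V0}) that $-rV'=\tfrac32 r^2\,\frac{5r^2-9}{(1+r^2)^3}\ge0$ for $r^2\ge 9/5$, and checks (for $A$ large, using $\Phi_A(r)\gtrsim\min\{1,r\}$ on $r\le A$, $\Phi_A\approx A$ on $r\ge A$, against $|V'|\sim r^{-3}$) that $\Phi_A|V'|\ge\frac{2(1+c)}{A^2}\zeta_A^2$ on $\{|r|\ge 2\}$ for some $c>0$, so that $\tfrac12\int\Phi_A V'w_1^2\le-\frac{1+c}{A^2}\int_{|r|\ge2}\zeta_A^2 w_1^2+C\int_{|r|\le2}w_1^2$. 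Fixing $K$ large enough that $\rho_K^4\ge\tfrac12$ on $|r|\le2$, all near‑origin error terms are $\lesssim\|\rho_K^2 w_1\|_{L^2}^2$; since $\sigma_A^2\approx\zeta_A^2$, integrating in $t$ gives $\int_0^T(\|\sigma_A\partial_r w_1\|_{L^2}^2+\frac1{A^2}\|\sigma_A w_1\|_{L^2}^2)\lesssim A\delta^2+\int_0^T\|\rho_K^2 w_1\|_{L^2}^2$ plus the nonlinear term $\int_0^T\big|\int N(w_1)(\Phi_A\partial_r w_1+\tfrac12\Phi_A'w_1)\big|$.

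\emph{The $w_2$ term.} To produce $\frac1{A^2}\int_0^T\|\sigma_A w_2\|_{L^2}^2$ I use $\mathcal C(t)=\int\sigma_A^2 w_1 w_2$, which again satisfies $|\mathcal C|\lesssim A\delta^2$; differentiating and using $\dot w_1=w_2$, $\dot w_2=-\tilde L w_1+N(w_1)$ together with one integration by parts gives
\[
\int\sigma_A^2 w_2^2=\dot{\mathcal C}+\int\sigma_A^2 w_{1,r}^2-\tfrac12\int(\sigma_A^2)''w_1^2+\int\sigma_A^2 V w_1^2-\int\sigma_A^2 w_1 N(w_1).
\]
Since $|(\sigma_A^2)''|\lesssim A^{-2}\sigma_A^2$ and $|V|\lesssim1$, multiplying by $A^{-2}$ and integrating in time bounds $\frac1{A^2}\int_0^T\|\sigma_A w_2\|_{L^2}^2$ by $A\delta^2$, by the quantities already controlled above, and by $\frac1{A^2}\int_0^T\int\sigma_A^2|w_1 N(w_1)|$.

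\emph{The nonlinear terms — the main obstacle.} It remains to absorb the nonlinear contributions of the two previous steps. Write $N(w_1)=a(r)w_1^2+b(r)w_1^3$ with $|a|\lesssim(1+r^2)^{-5/4}$ and $|b|\lesssim(1+r^2)^{-3/2}$, and use the uniform pointwise bound $\|(1+r^2)^{-1/4}w_1\|_{L^\infty}\lesssim\delta$ (which follows from oddness, $w_1(0)=0$ and the two a priori bounds via Cauchy--Schwarz), so that every nonlinear term carries a spare power of $\delta$. The delicacy — and this is exactly where the algebraic, rather than exponential, decay of $V$ and of $P_1$ in (\ref{cond:repuls}) forces the scheme of \cite{KMM4,KM1} to be modified — is that, after integrating $\Phi_A\partial_r w_1$ by parts, the line must be split into the core $\{|r|\le K\}$ (estimated directly against $\|\rho_K^2 w_1\|_{L^2}^2$), the transition band $\{K\le|r|\le A\}$ (where $\sigma_A^{-1}\lesssim1$ and, by oddness, the weighted Hardy bound $\int_{K\le|r|\le A}(1+r^2)^{-3/2}w_1^2\lesssim K^{-1}\|\sigma_A\partial_r w_1\|_{L^2}^2$ is available), and the slowly decaying tail $\{|r|\ge A\}$, and on each piece the nonlinear contribution must be shown to be $\lesssim\delta$ times the quadratic quantities appearing on the left‑hand side, using the decay of $a,b$ together with $\|(1+r^2)^{-1/2}w_1\|_{L^2}\lesssim\delta$. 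Fixing the parameters in the order $K$, then $A\gg K$, then $\delta$ so small that $\delta$, $A\delta$ and $A\delta/K$ are all below a fixed absolute constant, these contributions are absorbed into the left‑hand side and into $\int_0^T\|\rho_K^2 w_1\|_{L^2}^2$; controlling the tail $\{|r|\ge A\}$, where the weight $\sigma_A$ is exponentially small while $w_1$ may be of size $\delta(1+r^2)^{1/4}$, is the crux and the genuinely new point of the argument.
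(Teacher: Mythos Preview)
Your outline follows the paper's strategy closely (first virial $\mathcal I$ with weight $\Phi_A$, then the auxiliary functional $\mathcal H=\int\sigma_A^2 w_1w_2$ to recover $w_2$), and the linear computation is essentially correct. The gap is exactly where you flag it: the nonlinear tail $\{|r|\ge A\}$. The problem is not merely that you have not carried out the estimate; with the coercive information you have kept, it \emph{cannot} be carried out.

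When you bound the potential term by
\[
\tfrac12\int\Phi_A V'\,w_1^2\ \le\ -\frac{1+c}{A^2}\int_{|r|\ge2}\zeta_A^2 w_1^2+C\int_{|r|\le2}w_1^2,
\]
you are throwing away most of the repulsivity: $-\Phi_A V'$ decays only like $\frac{\Phi_A}{r}\,(1+r^2)^{-1}$ (polynomially), and you replace it by the much smaller $\frac{2}{A^2}\zeta_A^2$ (exponentially decaying). After integrating $\int\Phi_A(a w_1^2)\partial_r w_1$ by parts, the term $\int w_1^3\,a'\,\Phi_A$ has, on $\{|r|\ge A\}$, an integrand of size $\delta\cdot\frac{\Phi_A}{r}\,(1+r^2)^{-1}\,w_1^2$ (use $|w_1|\le\delta(1+r^2)^{1/4}$ and $|a'|\sim(1+r^2)^{-7/4}$). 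This is \emph{polynomially} decaying in $r$, while every quadratic quantity on your left-hand side carries a factor $\sigma_A^2\sim e^{-2|r|/A}$. No choice of $K\ll A$ and small $\delta$ makes $\delta\,\frac{\Phi_A}{r(1+r^2)}\lesssim \frac{1}{A^2}\sigma_A^2$ for all $r\ge A$. Bounding the tail instead by the a priori estimate $\|(1+r^2)^{-1/2}w_1\|_{L^2}\lesssim\delta$ (or $\|w_{1,r}\|_{L^2}\lesssim\delta$) gives at best a contribution of order $\delta^3$ \emph{pointwise in time}, hence $T\delta^3$ after integration --- not the uniform $A\delta^2$ claimed.

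The fix in the paper is simple once seen: keep the full algebraic lower bound on the repulsive potential,
\[
-\Phi_A V'\ \ge\ -C_1\,\mathbbm 1_{[-K,K]}\ +\ C_2\,\frac{\Phi_A}{r}\,\frac{1}{1+r^2}\,\mathbbm 1_{[-K,K]^c},
\]
so that $\int w_1^2\,\frac{\Phi_A}{r}\,\frac{1}{1+r^2}\,\mathbbm 1_{[-K,K]^c}$ sits on the coercive side (see (\ref{est:virial1})). Then the tail pieces of $K_1,K_2$ are \emph{exactly} $\delta$ times this good term and are absorbed for small $\delta$; no splitting at $|r|=A$ is needed. The remaining nonlinear pieces carrying $\Phi_A'=\zeta_A^2$ (your $K_3,K_4$) are handled globally by the Poincar\'e-type bound (Claim~1 in \cite{KMM4}),
\[
\int\Big|\frac{w_1}{(1+r^2)^{1/4}}\Big|^3\zeta_A^2\ \lesssim\ A^2\,\Big\|\frac{w_1}{(1+r^2)^{1/4}}\Big\|_{L^\infty}\int\Big|\partial_r\big(\zeta_A\,\tfrac{w_1}{(1+r^2)^{1/4}}\big)\Big|^2,
\]
and absorbed into $\int(\partial_r\tilde w_1)^2$ under $A^2\delta\ll A^{-2}$. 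Your Hardy bound on the transition band is then unnecessary.
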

The rest of this section is devoted to the proof of this Proposition. 

We consider equation (\ref{eq: w}) and define
\[
\mathcal I=\int\left(\Phi_A \partial_r w_1+\frac{1}{2}\Phi'_A w_1\right)w_2.
\]
From (\ref{eq:vir0}) it follows 
\[
\begin{aligned}
\dot{\mathcal I} & =-\int\Phi_A' (\partial_r w_1)^2+ \frac{1}{4}\int \Phi_A''' w_1^2 +\frac{1}{2} \int \Phi_A V' w_1^2\\
& \quad +\int N(w_1)\left(\Phi_A \partial_r w_1+\frac{1}{2}\Phi_A' w_1\right).
\end{aligned}
\]
Changing variables
\[
\tilde w_1=\zeta_A w_1, \qquad \zeta_A=\sqrt{\Phi_A'} ,
\]
we get the alternative identity (c.f. (\ref{eq:vir1})):
\begin{equation}
\label{eq:vir2}
\begin{aligned}
\dot{\mathcal I} &=-\int (\partial_r \tilde w_1)^2-\frac{1}{2} \int \left(\frac{\zeta_A''}{\zeta_A} - \frac{(\zeta_A')^2}{\zeta_A^2}\right) \tilde w_1^2
+\frac 12 \int \frac{\Phi_A V'}{\zeta_A^2} \tilde w_1^2\\
&\quad +\int N(w_1)\left(\Phi_A\partial_r w_1+\frac{1}{2}\Phi_A' w_1\right).
\end{aligned}
\end{equation}
We begin by estimating the first line above. 
We have
\begin{equation}
\label{CotaIdentidad}
\left|\left(\frac{\zeta_A''}{\zeta_A} - \frac{(\zeta_A')^2}{\zeta_A^2}\right)\right|=\left|\left(\frac{\zeta_A'}{\zeta_A}\right)'\right|\lesssim  \frac{1}{A^{2}}\mathbbm{1}_{[-2,2]}(r).
\end{equation}
Explicitly
\[
V(r)=\frac{3}{4}\frac{5r^2-2}{(1+r^2)^2}
\]
and
\[
\begin{aligned}
V'(r)=-\frac{15}{2}\frac{r^3}{(1+r^2)^3}+\frac{27}{2} \frac{r}{(1+r^2)^3}.
\end{aligned}
\]
Let $K>\sqrt{9/5}$. We can write 
\begin{equation}
\label{eq:choice k}
\begin{aligned}
-\Phi_A V'=-\frac{\Phi_A}{r} r V'= \frac{\Phi_A}{r} \frac{15r^2}{2(1+r^2)^3}\left(r^2-\frac{9}{5}\right)&= \frac{\Phi_A}{r} \frac{15r^2}{2(1+r^2)^3}\left(r^2-\frac{9}{5}\right)\mathbbm{1}_{[-K,K]}\\
&\quad +\frac{\Phi_A}{r} \frac{15r^2}{2(1+r^2)^3}\left(r^2-\frac{9}{5}\right)\mathbbm{1}_{[-K,K]^c}.
\end{aligned}
\end{equation}
Then, with some positive constants $C_1$, $C_2$: 
\[
-\Phi_A V'\geq -C_1 \mathbbm{1}_{[-K,K]} + C_2 \frac{\Phi_A}{r} \frac{1}{1+r^2} \mathbbm{1}_{[-K,K]^c}.
\]
Note that the first member above is compactly supported and the second member is decaying and positive. 
We get
\[
-\frac 12 \int \frac{\Phi_A V'}{\zeta_A^2} \tilde w_1^2\geq -C_1 \int w_1^2 \mathbbm{1}_{[-K,K]}+C_2\int w_1^2 \frac{\Phi_A}{r} \frac{1}{1+r^2} \mathbbm{1}_{[-K,K]^c}.
\]

To estimate the nonlinear terms we need some simple intermediate results. Consider the following lemma that relates orbital stability of $v=(v_1,v_2)$ for the weighted norms associated to $w=(w_1,w_2)$:
\begin{lemma}
The orbital stability for $v$ translates as:
\[
\norm{\frac{w_1(t)}{(1+r^2)^{1/2}}}_{L^2}+\norm{{w_{1,r}(t)}}_{L^2}+\norm{w_2(t)}_{L^2} \lesssim \delta
\]
for $t\geq 0$.
\begin{proof}
From the orbital stability we have
\[
\int v_1^2+v_{1,x}^2 +\cosh^2 x v_2 dx\lesssim \delta^2,
\]
then, by considering the change of variables $r=\sinh x$, $w_j=(1+r^2)^{1/4}v_j$ we get
\begin{equation}
\label{worbitalproof}
\int \frac{w_1^2}{(1+r^2)}dr+\int \left[\left(\frac{w_1}{(1+r^2)^{1/4}} \right)_r\right]^2 (1+r^2)^{1/2}dr+ \int w_2^2 dr \lesssim \delta^2.
\end{equation}
Expanding the middle term we obtain
\[
\int \left[\left(\frac{w_1}{(1+r^2)^{1/4}} \right)_r\right]^2 (1+r^2)^{1/2}dr=\int w_{1,r}^2+\frac14 \frac{r^2}{(1+r^2)^2}w^2-\frac{r}{1+r^2}w_r w dr.
\]
Replacing the expansion in \eqref{worbitalproof} and reorganizing we obtain
\begin{align*}
\int\frac{w_1^2}{1+r^2}+w_{1,r}^2+\frac{1}{4}\frac{r^2}{(1+r^2)^2}w_1^2+w_2^2 dr &\leq C\delta^2+\int \frac{|r|}{1+r^2}|w_{1,r}| |w_1|\\
&\leq C\delta^2+\int \frac{|w_{1,r}| |w_1|}{(1+r^2)^{1/2}}\\
&\leq C\delta^2 +\left(\int \frac{w_1^2}{1+r^2} \right)^{1/2} \left(\int w_{1,r}^2 \right)^{1/2}\\
&\leq C\delta^2+ \frac{1}{2} \int \frac{w_1^2}{1+r^2}+\frac12 \int w_{1,r}^2,
\end{align*}
from where we conclude
\[
\int \frac{w_1^2}{1+r^2}+w_{1,r}^2+w_2^2\lesssim \delta^2.
\]
\end{proof}
\end{lemma}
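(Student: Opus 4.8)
The statement is a direct translation of the orbital-stability bound already available for $v=(v_1,v_2)$ into the weighted norms for $w=(w_1,w_2)$. First I would start from the energy control obtained in the previous subsection (Lemma~\ref{lem: stab} together with the remark following it), namely
\[
\int \frac{w_1^2}{1+r^2}\,\ud r+\int\left[\left(\frac{w_1}{(1+r^2)^{1/4}}\right)_r\right]^2 (1+r^2)^{1/2}\,\ud r+\int w_2^2\,\ud r\lesssim \delta^2,
\]
which is exactly what the change of variables $r=\sinh x$, $w_j=(1+r^2)^{1/4}v_j$ produces from $\int v_1^2+v_{1,x}^2+\cosh^2 x\,v_2^2\,\ud x\lesssim\delta^2$ (recall $\ud x=(1+r^2)^{-1/2}\ud r$, $v_{1,x}=(1+r^2)^{1/2}v_{1,r}$, and $v_1=(1+r^2)^{-1/4}w_1$).

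Next I would expand the middle term. Writing $v_1=(1+r^2)^{-1/4}w_1$ we have
\[
v_{1,r}=(1+r^2)^{-1/4}w_{1,r}-\tfrac12 r(1+r^2)^{-5/4}w_1,
\]
so that
\[
v_{1,x}^2=(1+r^2)\,v_{1,r}^2=w_{1,r}^2-\frac{r}{1+r^2}\,w_{1,r}w_1+\frac14\frac{r^2}{(1+r^2)^2}\,w_1^2 .
\]
Substituting this into the bound above gives
\[
\int \frac{w_1^2}{1+r^2}+w_{1,r}^2+\frac14\frac{r^2}{(1+r^2)^2}w_1^2+w_2^2\,\ud r\lesssim \delta^2+\int\frac{|r|}{1+r^2}\,|w_{1,r}|\,|w_1|\,\ud r,
\]
the error term coming from moving the cross term to the right. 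I would then absorb this cross term: since $|r|/(1+r^2)\le (1+r^2)^{-1/2}$, Cauchy--Schwarz gives
\[
\int\frac{|r|}{1+r^2}|w_{1,r}||w_1|\le\Big(\int\frac{w_1^2}{1+r^2}\Big)^{1/2}\Big(\int w_{1,r}^2\Big)^{1/2}\le\frac12\int\frac{w_1^2}{1+r^2}+\frac12\int w_{1,r}^2,
\]
and the two halves are absorbed into the left-hand side (the $\tfrac14 r^2(1+r^2)^{-2}w_1^2$ term is nonnegative and can simply be dropped). This yields $\int \frac{w_1^2}{1+r^2}+w_{1,r}^2+w_2^2\,\ud r\lesssim\delta^2$, which is the asserted estimate since $\norm{w_1/(1+r^2)^{1/2}}_{L^2}^2=\int w_1^2/(1+r^2)$.

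There is no real obstacle here; the only point requiring a little care is the expansion of $v_{1,x}^2$ in terms of $w_1$ and $w_{1,r}$ and making sure the cross term is genuinely absorbable, which it is because its weight $|r|/(1+r^2)$ is bounded by the geometric mean of the weights $1/(1+r^2)$ and $1$ appearing in the two controlled quantities. The oddness of $v_j$ (hence of $w_j$) plays no role in this lemma beyond being inherited from the setup; it is only the orbital-stability bound from Lemma~\ref{lem: stab} that is used.
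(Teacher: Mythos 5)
Your proposal is correct and follows essentially the same route as the paper: change variables to land on the weighted energy bound, expand the derivative of $(1+r^2)^{-1/4}w_1$, and absorb the cross term by Cauchy--Schwarz using $|r|/(1+r^2)\le(1+r^2)^{-1/2}$. The only cosmetic difference is that you make explicit the (harmless) dropping of the nonnegative $\tfrac14 r^2(1+r^2)^{-2}w_1^2$ term.
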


We have 
\begin{lemma}
The following estimate holds
\[
\norm{\frac{w_1}{(1+r^2)^{1/4}}}_{L^\infty} \lesssim \delta.
\]
\begin{proof}
Notice that
\begin{align*}
\int_0^{\bar r} \frac{w_1 w_{1,r}}{(1+r^2)^{1/2}}&=\frac{1}{2} \int_0^r \frac{(w_1^2)_r}{(1+r^2)^{1/2}}\\
&=\frac{1}{2}\frac{w_1^2({\bar r})}{(1+{\bar r}^2)^{1/2}}+\frac{1}{2}\int_0^r \frac{r w_1^2}{(1+r^2)^{3/2}}.
\end{align*}
Then, by the Cauchy-Schwarz inequality  we get
\[
\frac{w_1({\bar r})}{(1+{\bar r}^2)^{1/4}} \lesssim \norm{\frac{w_1}{(1+r^2)^{1/2}}}_{L^2}\cdot \norm{w_{1,r}}_{L^2}+\norm{\frac{w_1}{(1+r^2)^{1/2}}}_{L^2}.
\]
We conclude\[
\norm{\frac{w_1(r)}{(1+r^2)^{1/4}}}_{L^\infty}\lesssim \delta.
\]
\end{proof}
\end{lemma}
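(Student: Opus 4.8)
The strategy is a one-dimensional Agmon/Gagliardo--Nirenberg type estimate that upgrades the two weighted $L^2$ controls from the previous lemma, namely $\norm{w_1/(1+r^2)^{1/2}}_{L^2}\lesssim\delta$ and $\norm{w_{1,r}}_{L^2}\lesssim\delta$, into a pointwise bound on the scaled profile $w_1(r)/(1+r^2)^{1/4}$. The crucial structural input is that $w_1$ is odd, hence $w_1(0)=0$, so the quantity $w_1^2(r)/(1+r^2)^{1/2}$ vanishes at the origin and can be reconstructed by integrating its $r$-derivative outward from $0$.

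Concretely, for $\bar r>0$ I would write
\[
\frac{w_1^2(\bar r)}{(1+\bar r^2)^{1/2}}=\int_0^{\bar r}\partial_r\!\left(\frac{w_1^2(r)}{(1+r^2)^{1/2}}\right)dr
=\int_0^{\bar r}\frac{2\,w_1 w_{1,r}}{(1+r^2)^{1/2}}\,dr-\int_0^{\bar r}\frac{r\,w_1^2}{(1+r^2)^{3/2}}\,dr,
\]
and then discard the last integral, which is nonnegative for $\bar r>0$ and is subtracted; equivalently, one starts from $\int_0^{\bar r} w_1 w_{1,r}(1+r^2)^{-1/2}\,dr$, integrates by parts, and retains only the favorable boundary term. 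Applying Cauchy--Schwarz to the surviving integral, splitting the integrand as $\tfrac{|w_1|}{(1+r^2)^{1/2}}\cdot|w_{1,r}|$, gives
\[
\frac{w_1^2(\bar r)}{(1+\bar r^2)^{1/2}}\le 2\,\norm{\frac{w_1}{(1+r^2)^{1/2}}}_{L^2}\norm{w_{1,r}}_{L^2}\lesssim\delta^2.
\]
Taking square roots, treating $\bar r<0$ by oddness, and using the value $0$ at the origin yields $\norm{w_1/(1+r^2)^{1/4}}_{L^\infty}\lesssim\delta$.

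There is no genuine obstacle here; the only point deserving a word of care is that the computation should first be carried out for sufficiently smooth and decaying $w_1$, so that the fundamental theorem of calculus and the Cauchy--Schwarz step on $(0,\infty)$ are legitimate, and then extended to the natural energy class by density, which is harmless since the two weighted $L^2$ norms on the right are precisely the ones propagated by the orbital-stability lemma. Since those bounds hold for all $t\ge 0$, the resulting $L^\infty$ estimate is likewise uniform in time, which is the form in which it will be used to absorb the nonlinear terms in the virial identities.
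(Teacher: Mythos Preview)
Your proof is correct and follows essentially the same route as the paper: write $w_1^2(\bar r)/(1+\bar r^2)^{1/2}$ as an integral from the origin (using $w_1(0)=0$ from oddness), integrate by parts, and apply Cauchy--Schwarz to $\int_0^{\bar r} w_1 w_{1,r}(1+r^2)^{-1/2}$. Your observation that the remaining integral $\int_0^{\bar r} r\,w_1^2/(1+r^2)^{3/2}$ carries a favorable sign and can simply be dropped is a slight streamlining of the paper's version, which keeps that term and bounds it by $\norm{w_1/(1+r^2)^{1/2}}_{L^2}^2$.
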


With the previous estimate we are able to get control over the nonlinear terms in (\ref{eq:vir2}). 
Explicitly we have
\[
(1+r^2)^{1/4}N\left(\frac{w_1}{(1+r^2)^{1/4}}, r\right)=-\frac{6 H w_1^2}{(1+r^2)^{5/4}}-\frac{2 w_1^3}{(1+r^2)^{3/2}}.
\]
It follows
\[
\begin{aligned}
&-\int (1+r^2)^{1/4}N\left(\frac{w_1}{(1+r^2)^{1/4}}, r\right)\left(\Phi_A\partial_r w_1+\frac{1}{2}\Phi_A' w_1\right)= -2\int w_1^3\Phi_A \partial_r\left(\frac{H}{(1+r^2)^{5/4}}\right)\\
&\quad -\frac{1}{2}\int w_1^4 \Phi_A \partial_r\left(\frac{1}{(1+r^2)^{3/2}}\right)+\int \frac{w_1^3 H}{(1+ r^2)^{5/4}} \Phi_A'+\frac{1}{2}\int\frac{w_1^4}{(1+r^2)^{3/2}}\Phi_A'\\
&\quad 
=K_1+K_2+K_3+K_4.
\end{aligned}
\]
To  estimate $K_1$ from below we note that, 
\[
\Phi_A \partial_r\left(\frac{H}{(1+r^2)^{5/4}}\right)=\frac{\Phi_A}{r} r \partial_r\left(\frac{H}{(1+r^2)^{5/4}}\right)\geq -C_3\mathbbm{1}_{[-K,K]}- C_4 \frac{\Phi_A}{r} \frac{1}{(1+r^2)^{5/4}} \mathbbm{1}_{[-K,K]^c}.
\]
Then 
\[
K_1\geq -C_3 \norm{\frac{w_1}{(1+r^2)^{1/4}}}_{L^\infty} \int w_1^2 \mathbbm{1}_{[-K,K]}- C_4 \norm{\frac{w_1}{(1+r^2)^{1/4}}}_{L^\infty} \int w_1^2  \frac{\Phi_A}{r} \frac{1}{(1+r^2)} \mathbbm{1}_{[-K,K]^c}.
\]
Similarly we estimate $K_2$ 
\[
K_2\geq -C_5 \norm{\frac{w_1}{(1+r^2)^{1/4}}}_{L^\infty}^2 \int w_1^2 \mathbbm{1}_{[-K,K]}- C_6 \norm{\frac{w_1}{(1+r^2)^{1/4}}}^2_{L^\infty} \int w_1^2  \frac{\Phi_A}{r} \frac{1}{(1+r^2)} \mathbbm{1}_{[-K,K]^c}.
\]
Next, we note using Claim 1 in \cite{KMM4} 
\begin{align*}
\left |\int \frac{w_1^3 H}{(1+ r^2)^{5/4}} \Phi_A'\right|&\leq \int \left(\frac{w_1}{(1+r^2)^{1/4}} \right)^3\cdot \frac{\Phi_A'}{(1+r^2)^{1/2}}\\
&\lesssim \int \left|\frac{w_1}{(1+r^2)^{1/4}} \right| ^3\zeta_A^2\\
&\lesssim A^2 \norm{\frac{w_1}{(1+r^2)^{1/4}}}_{L^\infty} \int \left| \partial_r \left( \zeta_A \frac{w_1}{(1+r^2)^{1/4}}\right)\right|^2,
\end{align*}
hence
\[
K_3\gtrsim -A^2 \norm{\frac{w_1}{(1+r^2)^{1/4}}}_{L^\infty} \int \left| \partial_r \left( \zeta_A \frac{w_1}{(1+r^2)^{1/4}}\right)\right|^2,
\]
and similarly 
\[
K_4\gtrsim -A^2 \norm{\frac{w_1}{(1+r^2)^{1/4}}}_{L^\infty}^2 \int \left| \partial_r \left( \zeta_A \frac{w_1}{(1+r^2)^{1/4}}\right)\right|^2.
\]
It is easy to see that 
\[
\begin{aligned}
\int \left| \partial_r \left( \zeta_A \frac{w_1}{(1+r^2)^{1/4}}\right)\right|^2& \lesssim \int (\partial_r \tilde w_1)^2+\int \frac{\tilde w_1^2}{(1+r^2)^{1/2}}\\
&\lesssim \int (\partial_r \tilde w_1)^2+\int \sigma_A^2 w_1^2.
\end{aligned}
\]
Summarizing, provided that  $\delta \gtrsim \norm{\frac{w_1}{(1+r^2)^{1/4}}}_{L^\infty}$, $A^2\delta$  is small, we get
\[
\begin{aligned}
-\dot{\mathcal I}+ C_7\int w_1^2 \mathbbm{1}_{[-K,K]}+ A^2 \delta \int \sigma_A^2 w_1^2 &\gtrsim \int |\partial_r \tilde w_1|^2 +\int w_1^2 \frac{\Phi_A}{r} \frac{1}{1+r^2} \mathbbm{1}_{[-K,K]^c}.
\end{aligned}
\]
As in the proof of (18) in  \cite{KM1} we infer 
\[
\int \sigma^2_A |\partial_r w_1|^2+\frac{1}{A^2}\int \sigma_A^2 w_1^2\lesssim \int |\partial_r \tilde w_1|^2+\frac{1}{A} \int w_1^2 \mathbbm{1}_{[-K,K]},
\]
hence provided $A^2\delta\ll A^{-2}$ 
\begin{equation}
\label{est:virial1}
-\dot{\mathcal I}+ \int w_1^2 \mathbbm{1}_{[-K,K]}\gtrsim \int \sigma^2_A |\partial_r w_1|^2+\frac{1}{A^2}\int \sigma_A^2 w_1^2+\int  \frac{\Phi_A}{r} \frac{1}{1+r^2} w_1^2,
\end{equation}
in particular
\[
-\dot{\mathcal I}+ \int w_1^2 \mathbbm{1}_{[-K,K]}\gtrsim \int \sigma^2_A |\partial_r w_1|^2+\frac{1}{A^2}\int \sigma_A^2 w_1^2,
\]
integrating in time gives
\begin{equation}
\label{est:virial1T}
A \delta^2+\int_0^T \| \rho_K w_1\|_{L^2}^2 \gtrsim \int_0^T \|\sigma_A \partial_r w_1\|_{L^2}^2 +\frac{1}{A^2}\|\sigma_A w_1\|_{L^2}^2.
\end{equation}
\subsection{Second virial estimate}

We define
\[
\mathcal H=\int \sigma^2_A w_1 w_2.
\]
Differentiating in $t$ we get
\[
\dot{\mathcal H}=\int \sigma^2_A w_2^2-\int \sigma^2_A \left(|\partial_r w_1|^2+V_0 w_1^2\right)-\int \sigma^2_A\left(\frac{6 H w_1^3}{(1+r^2)^{5/4}} + \frac{2 w_1^4}{(1+r^2)^{3/2}}\right).
\]
It is not hard to see that 
\begin{equation}
\label{eq:virialtwo}
\int \sigma^2_A w_2^2\lesssim \dot{\mathcal H}+\int \sigma^2_A |\partial_r w_1|^2+\int \sigma^2_A w_1^2.
\end{equation}
Combining (\ref{est:virial1T}) and (\ref{eq:virialtwo}) we get
\begin{equation}
\label{est:virial12}
\int_0^T \|\sigma_A \partial_r w_1\|_{L^2}^2+\frac{1}{A^2}\|\sigma_A w_1\|_{L^2}^2+\frac{1}{A^2}\|\sigma_A w_2\|_{L^2}^2\lesssim A\delta^2+\int_0^T \|\rho^2_K w_1\|_{L^2}^2.
\end{equation}

\section{Auxiliary Lemmas}
\setcounter{equation}{0}
For each small $\eps>0$ we define the operator  $X_\eps$ in terms of it Fourier transform 
\[
\widehat{X_\eps h}(\xi)=\frac{\hat h(\xi)}{1+\eps \xi^2}.
\]
In other words 
\[
X_\eps h= g\Longrightarrow h=(-\eps\partial_{rr}+1) g.
\]
Below we gather some basic properties of $X_\epsilon$.
\begin{lemma}[Lemma 4.7 \cite{KMMV}]\label{lem:xeps}
It holds
\[
\begin{aligned}
\|X_\eps h\|_{L^2}&\leq \|h\|_{L^2},\\
\|\partial_r X_\eps h\|_{L^2}&\leq \eps^{-\frac{1}{2}} \|h\|_{L^2},\\
\|\partial_{rr} X_\eps h\|_{L^2}&\leq \eps^{-1} \|h\|_{L^2}.
\end{aligned}
\]
\end{lemma}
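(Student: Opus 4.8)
The plan is to work entirely on the Fourier side and reduce each of the three bounds to an elementary pointwise estimate on the associated Fourier multiplier. By definition $\widehat{X_\eps h}(\xi)=m_\eps(\xi)\,\hat h(\xi)$ with $m_\eps(\xi)=(1+\eps\xi^2)^{-1}$, and since differentiation in $r$ corresponds to multiplication by $i\xi$ on the Fourier side, one has $\widehat{\partial_r X_\eps h}(\xi)=i\xi\,m_\eps(\xi)\hat h(\xi)$ and $\widehat{\partial_{rr}X_\eps h}(\xi)=-\xi^2 m_\eps(\xi)\hat h(\xi)$. By Plancherel's theorem it therefore suffices to prove the three pointwise multiplier bounds
\[
\sup_{\xi\in\R} m_\eps(\xi)\leq 1,\qquad \sup_{\xi\in\R}\bigl|\xi\,m_\eps(\xi)\bigr|\leq \eps^{-1/2},\qquad \sup_{\xi\in\R}\bigl|\xi^2 m_\eps(\xi)\bigr|\leq \eps^{-1}.
\]

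For the first bound, $1+\eps\xi^2\geq 1$ gives $0<m_\eps(\xi)\leq 1$ immediately. For the third, $1+\eps\xi^2\geq \eps\xi^2$ gives $\xi^2 m_\eps(\xi)=\xi^2/(1+\eps\xi^2)\leq\eps^{-1}$. For the middle (gradient) bound I would substitute $t=\eps\xi^2\geq 0$, so that
\[
\xi^2 m_\eps(\xi)^2=\frac{\xi^2}{(1+\eps\xi^2)^2}=\frac1\eps\cdot\frac{t}{(1+t)^2},
\]
and use the elementary maximization $\sup_{t\geq 0}t/(1+t)^2=1/4$ (attained at $t=1$) to conclude $|\xi\,m_\eps(\xi)|\leq\tfrac12\eps^{-1/2}\leq\eps^{-1/2}$. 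Equivalently one can invoke the AM--GM inequality $1+\eps\xi^2\geq 2\sqrt{\eps}\,|\xi|$ directly.

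There is no genuine obstacle here: this is the standard smoothing estimate for the elliptic resolvent $(-\eps\partial_{rr}+1)^{-1}$, and the only point requiring any care is tracking the correct power of $\eps$ in the gradient bound, which is precisely the content of the one-line maximization above. Since the statement is quoted verbatim from \cite[Lemma 4.7]{KMMV}, one could also simply cite it; the argument sketched above reproduces the proof for completeness.
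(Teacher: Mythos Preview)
Your proof is correct. The paper does not give its own proof of this lemma but simply cites \cite[Lemma~4.7]{KMMV}; your Plancherel/multiplier argument is exactly the standard proof one expects there, so there is nothing to compare.
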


Next we define the operator $S_\eps = X_\eps U$.   
We have the following transfer estimate (c.f. similar results in \cite{KMMV,  KM1}).
\begin{lemma}\label{lem:transfer}
For any $K>0$ and for any odd function $u$ we have
\[
\|\rho_K^2\sigma_B^2 u\|_{L^2}\lesssim  K \|\rho_K \sigma_B S_\eps u\|_{L^2}+\eps  K \| \rho_K\sigma_B \partial_rS_\eps u\|_{L^2}.
\]
\end{lemma}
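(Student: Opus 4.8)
The plan is to reconstruct $u$ from $S_\eps u$ by inverting, in turn, the operators $X_\eps$ and $U$, and then to estimate the resulting integral operator by a weighted Hardy--Schur argument in which the decay rate of $\rho_K\sigma_B$ produces the gain $K$. Write $h:=S_\eps u = X_\eps(Uu)$. Since $X_\eps^{-1}=1-\eps\partial_{rr}$ (cf.\ the definition of $X_\eps$), we have $Uu = h-\eps h''$. Now $u$ is odd, so $u(0)=0$; moreover $Uu = u_r+\tfrac{3r}{2(1+r^2)}u$ is even (as $u_r$ and $\tfrac{3r}{2(1+r^2)}u$ are), hence $h=X_\eps(Uu)$ is even and $h'(0)=0$. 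Since $U=\tilde Y\,\partial_r\,\tilde Y^{-1}$ with $\tilde Y=(1+r^2)^{-3/4}$, the identity $Uu=g$ reads $\partial_r(\tilde Y^{-1}u)=\tilde Y^{-1}g$, so $u(r)=\tilde Y(r)\int_0^r\tilde Y^{-1}(s)g(s)\,ds$. Substituting $g=h-\eps h''$ and integrating by parts once in the $\eps h''$ term — the boundary contributions at $s=0$ vanish because $h'(0)=0$ and $(\tilde Y^{-1})'(0)=0$ — gives the pointwise formula
\[
u(r)=\tilde Y(r)\int_0^r\tilde Y^{-1}(s)h(s)\,ds\;-\;\eps h'(r)\;+\;\eps\,\tilde Y(r)\int_0^r(\tilde Y^{-1})'(s)h'(s)\,ds .
\]

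The local term $-\eps h'(r)$ is harmless: since $\rho_K^2\sigma_B^2\le\rho_K\sigma_B\le1$ and $K\gtrsim1$ (the regime in which the lemma is applied), it contributes at most $\eps\|\rho_K\sigma_B h'\|_{L^2}\le\eps K\|\rho_K\sigma_B\partial_rS_\eps u\|_{L^2}$. For the two integral terms I use the elementary kernel bounds, valid whenever $s$ lies between $0$ and $r$ (so $|s|\le|r|$):
\[
0\le \tilde Y(r)\,\tilde Y^{-1}(s)=\Bigl(\tfrac{1+s^2}{1+r^2}\Bigr)^{3/4}\le1,
\qquad
\bigl|\tilde Y(r)(\tilde Y^{-1})'(s)\bigr|\le \tfrac{3|s|}{2(1+s^2)}\le\tfrac34 ,
\]
which express nothing but the monotonicity of $\tilde Y$ on $(0,\infty)$ — the repulsivity built in by the Darboux transformation. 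Hence the first and third terms above are bounded pointwise by $\int_0^{|r|}|h(s)|\,ds$ and $\eps\int_0^{|r|}|h'(s)|\,ds$ respectively, up to absolute constants.

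It remains to prove the Hardy-type inequality: with $\omega:=\rho_K\sigma_B$, for every $g$,
\[
\Bigl\|\,\omega^2(r)\!\int_0^{|r|}\!|g(s)|\,ds\,\Bigr\|_{L^2_r}\;\lesssim\;K\,\|\omega g\|_{L^2}.
\]
Indeed $\omega$ is even and positive, and $\omega^2(r)\simeq e^{-2|r|/L}$ with $\tfrac1L=\tfrac1K+\tfrac1B$, so $L\le K$. By Cauchy--Schwarz $\int_0^{|r|}|g|\le\bigl(\int_0^{|r|}\omega^{-2}\bigr)^{1/2}\|\omega g\|_{L^2}$, and $\int_0^{|r|}\omega^{-2}(s)\,ds\lesssim L\,e^{2|r|/L}\lesssim L\,\omega^{-2}(r)$; therefore $\omega^2(r)\int_0^{|r|}|g|\lesssim L^{1/2}\omega(r)\|\omega g\|_{L^2}$, and taking the $L^2_r$ norm together with $\|\omega\|_{L^2}^2\lesssim L$ gives the bound with constant $\lesssim L\le K$. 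Applying this with $g=h$ and with $g=h'$ controls the first and third terms of the formula for $u$ by $K\|\rho_K\sigma_B S_\eps u\|_{L^2}$ and $\eps K\|\rho_K\sigma_B\partial_r S_\eps u\|_{L^2}$; together with the local term this is exactly the asserted estimate. (For full rigour one first carries out the computation for Schwartz $u$ — for which $h,h',h''\in L^2$ by Lemma \ref{lem:xeps} — and then passes to the limit.)

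The main obstacle is precisely the $\eps h''$ produced by $X_\eps^{-1}$: a direct estimate would leave $\eps\|\partial_{rr}S_\eps u\|_{L^2}$ on the right-hand side, which by Lemma \ref{lem:xeps} is only $O(1)$ and is \emph{not} controlled by weighted norms of $S_\eps u$ itself; the single integration by parts, which trades $\eps h''$ for the local term $-\eps h'$ plus a benign integral term, is what makes the argument close. The only other delicate point is the bookkeeping of the weight $\rho_K\sigma_B$ so as to extract the factor $K$ (rather than $K^2$ or $1$), which is exactly the Hardy computation above and hinges on the sharp exponential rate $1/L=1/K+1/B$.
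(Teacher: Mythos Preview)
Your proof is correct and follows essentially the same route as the paper: both invert $X_\eps$ and $U$ to obtain the pointwise formula $u=\tilde Y\int_0^r\tilde Y^{-1}h-\eps h'+\eps\,\tilde Y\int_0^r(\tilde Y^{-1})'h'$ (the single integration by parts trading $h''$ for $h'$ being the key step), use the kernel bounds $\tilde Y(r)\tilde Y^{-1}(s)\le1$ and $|\tilde Y(r)(\tilde Y^{-1})'(s)|\lesssim1$ for $|s|\le|r|$, and finish with weighted Cauchy--Schwarz. Your write-up is more explicit than the paper's about the Hardy computation and the provenance of the factor $K$, but the argument is the same.
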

\begin{proof}
We recall that with $\tilde Y=(1+r^2)^{-\frac{3}{4}}$ we have 
\[
U u=\tilde Y\cdot \partial_r\cdot  \tilde Y^{-1} u.
\]
Set $v=S_\eps u$.  Note that $v$ is even in $r$. 
Then 
\[
X_\eps^{-1} v= U u,
\]
hence
\[
\partial_r( \tilde Y^{-1}u)= \tilde Y^{-1}X_\eps^{-1}  v=-\eps \tilde Y^{-1} \partial_{rr}v + \tilde Y^{-1} v.
\]
Integrating the above expression on $(0,r)$ and using $u(0)=0$, $\partial_r v(0)=0$  we get
\[
\begin{aligned}
\tilde Y^{-1}(r)u(r)&=-\eps \int_0^r \tilde Y^{-1} \partial_{rr}v+\int_0^r \tilde Y^{-1} v\\
&=-\eps  \tilde Y^{-1} (r)\partial_{r}v(r)+\eps \int_0^r \partial_r \tilde Y^{-1} \partial_{r}v+\int_0^r \tilde Y^{-1} v,
\end{aligned}
\] 
hence 
\begin{equation}
\sigma^2_B(r)\rho_K^2(r) u(r)= -\eps \sigma^2_B(r) \rho_K^2(r) \partial_{r}v(r)+\eps \sigma^2_B \rho_K^2 \tilde Y \int_0^r \partial_r \tilde Y^{-1} \partial_{r}v+\sigma^2_B\rho_K^2 \tilde Y\int_0^r \tilde Y^{-1} v.
\label{eq:rhoKu}
\end{equation}
We have
\[
\begin{aligned}
\left| \tilde Y(r)\int_0^r \partial_r \tilde Y^{-1} \partial_{r}v\right|&\lesssim \|\rho_K\sigma_B\partial_r v\|_{L^2}\sigma^{-1}_B(r)\rho_K^{-1}(r), \\
\left|\tilde Y\int_0^r \tilde Y^{-1} v\right|&\lesssim \|\rho_K\sigma_B v\|_{L^2} \sigma^{-1}_B(r)\rho_K^{-1} (r). 
\end{aligned}
\]
The desired estimate follows now easily from (\ref{eq:rhoKu}). 
\end{proof}

Our next result will be used repeatedly in what follows.
\begin{lemma}\label{lem:comut 1}
It holds
\begin{equation}
\label{est:comut 1}
\|g X_\eps f\|_{L^2}\lesssim \|X_\eps(gf)\|_{L^2}+\eps^{1/2} \|\partial_r g X_\eps f\|_{L^2}+\eps\|\partial_{rr} g X_\eps f\|_{L^2}.
\end{equation}
\end{lemma}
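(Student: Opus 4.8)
\textbf{Proof plan for Lemma \ref{lem:comut 1}.}

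The plan is to exploit the defining relation $X_\eps f = (-\eps\partial_{rr}+1)^{-1}f$ by conjugating the multiplication operator $g$ past $X_\eps$ and controlling the commutator. First I would write $g X_\eps f = X_\eps(gf) + [g, X_\eps]f$, so that the whole content of the lemma is the estimate
\[
\|[g,X_\eps]f\|_{L^2}\lesssim \eps^{1/2}\|\partial_r g\, X_\eps f\|_{L^2}+\eps\|\partial_{rr} g\, X_\eps f\|_{L^2}.
\]
To compute the commutator, set $h=X_\eps f$, so $f=(-\eps\partial_{rr}+1)h$. Then
\[
X_\eps^{-1}(gh) = (-\eps\partial_{rr}+1)(gh) = g(-\eps\partial_{rr}+1)h - \eps\left(2\partial_r g\,\partial_r h + \partial_{rr} g\, h\right) = gf - \eps\left(2\partial_r g\,\partial_r h+\partial_{rr}g\,h\right),
\]
hence
\[
gh = X_\eps(gf) - \eps X_\eps\!\left(2\partial_r g\,\partial_r h + \partial_{rr} g\, h\right),
\]
which is exactly $g X_\eps f - X_\eps(gf) = -\eps X_\eps(2\partial_r g\,\partial_r X_\eps f + \partial_{rr} g\, X_\eps f)$.

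Now I would estimate the right-hand side using Lemma \ref{lem:xeps}. Since $\|X_\eps(\cdot)\|_{L^2}\le\|\cdot\|_{L^2}$, the term $\eps\|X_\eps(\partial_{rr} g\, X_\eps f)\|_{L^2}\le \eps\|\partial_{rr}g\, X_\eps f\|_{L^2}$, giving the last term directly. For the remaining term we cannot afford a full $\eps\|\partial_r g\,\partial_r X_\eps f\|_{L^2}$ (that would cost a derivative on $h$ with only one power of $\eps$), so instead I would move one derivative off $h$ and onto $X_\eps$ by writing $\eps X_\eps(\partial_r g\,\partial_r h) = \eps X_\eps \partial_r(\partial_r g\, h) - \eps X_\eps(\partial_{rr}g\, h)$; the second piece is again bounded by $\eps\|\partial_{rr}g\,X_\eps f\|_{L^2}$, while for the first I use $\|X_\eps\partial_r(\cdot)\|_{L^2}=\|\partial_r X_\eps(\cdot)\|_{L^2}\le\eps^{-1/2}\|\cdot\|_{L^2}$ from the second bound of Lemma \ref{lem:xeps}, producing $\eps\cdot\eps^{-1/2}\|\partial_r g\, h\|_{L^2}=\eps^{1/2}\|\partial_r g\, X_\eps f\|_{L^2}$, which is the middle term. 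Collecting these bounds yields (\ref{est:comut 1}).

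The only point requiring care—and the main (mild) obstacle—is the bookkeeping in the $\partial_r g\,\partial_r h$ term: one must spend the derivative on $X_\eps$ rather than on $h$, otherwise the power of $\eps$ is wrong. Everything else is a direct application of the three bounds in Lemma \ref{lem:xeps} together with the algebraic identity for $(-\eps\partial_{rr}+1)(gh)$. Implicitly $g$ is assumed smooth with the relevant derivatives bounded so the products $\partial_r g\, X_\eps f$ and $\partial_{rr}g\, X_\eps f$ make sense in $L^2$; in the applications $g$ will be one of the weight functions $\sigma_A,\rho_K,\zeta_B$, for which this is clear.
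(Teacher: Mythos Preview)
Your proposal is correct and is essentially identical to the paper's proof: both set $h=X_\eps f$, $k=X_\eps(gf)$, derive the algebraic identity $gh-k=-\eps X_\eps\bigl(2\partial_r(\partial_r g\,h)-\partial_{rr}g\,h\bigr)$ (your product-rule rewrite of $\partial_r g\,\partial_r h$ is exactly how the paper arrives at this form), and then apply the three bounds of Lemma~\ref{lem:xeps} to obtain the $\eps^{1/2}$ and $\eps$ factors. The ``bookkeeping'' point you flag---spending the derivative on $X_\eps$ rather than on $h$---is precisely the step the paper performs.
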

\begin{proof}
Denote
\[
X_\eps(gf)=k, \quad X_\eps f= h. 
\]
We can write, 
\begin{align*}
X_\eps^{-1}(gh)=gh-\eps \partial_{rr}(gh)\\
X_\eps^{-1}(k)=gh-\eps g \partial_{rr}h.
\end{align*}
Then we have
\[
X_\eps^{-1}(gh)-X^{-1}_\eps k=-\eps(2\partial_r (\partial_r g h)-\partial_{rr} g h),
\]
to see this notice that
\begin{align*}
X_\eps^{-1}(gh)-X_\ve^{-1}(k)&=-\eps \partial_{rr} (gh)+\eps g\partial_{rr}h\\
&=\eps \left(g \partial_{rr} h-\partial_{rr} gh-g\partial_{rr}h-2\partial_r g \partial_r h \right)\\
&=-\eps \left(\partial_{rr}g h+2\partial_r g \partial_r h \right)\\
&=-\eps \left(2\partial_r(\partial_r g h)-\partial_{rr} gh \right).
\end{align*}
Hence
\[
\|gh\|_{L^2}^2\lesssim \|k\|^2_{L^2}+\eps^2\|\partial_r X_\eps(\partial_r g h)\|_{L^2}^2+\eps^2 \|X_\eps (\partial_{rr} g h)\|_{L^2}^2.
\]
Estimate (\ref{est:comut 1}) follows from this and Lemma \ref{lem:xeps}.
\end{proof}
Next we will show 
\begin{lemma}\label{lem:coerc}
For any sufficiently large $A$ it holds 
\[
\begin{aligned}
\|\sigma_A S_\eps u\|_{L^2}& \lesssim \eps^{-\frac{1}{2}} \|\sigma_A u\|_{L^2},\\
\|\sigma_A \partial_r S_\eps u\|_{L^2}& \lesssim  \eps^{-1} \|\sigma_A u\|_{L^2}.
\end{aligned}
\]
Similarly, for any large $B$ and $K$, $B\gg K$,  it holds
\[
\|\sigma^2_B\rho^2_K S_\eps u\|_{L^2}\lesssim \eps^{-\frac{1}{2}} \|\sigma^2_B\rho^2_K u\|_{L^2}.
\] 
\end{lemma}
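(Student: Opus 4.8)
\emph{Sketch of proof.} The plan is to reduce everything to three ingredients already at hand: the mapping properties of $X_\eps$ (Lemma \ref{lem:xeps}), the commutator estimate (Lemma \ref{lem:comut 1}), and the elementary pointwise bounds $|\partial_r\sigma_A|\le A^{-1}\sigma_A$ and $|\partial_{rr}\sigma_A|\le A^{-2}\sigma_A$ (together with the analogues $|\partial_r(\sigma_B^2\rho_K^2)|\lesssim K^{-1}\sigma_B^2\rho_K^2$ and $|\partial_{rr}(\sigma_B^2\rho_K^2)|\lesssim K^{-2}\sigma_B^2\rho_K^2$, valid since $B\gg K$). These last bounds are precisely what makes it possible to absorb spurious commutator terms into the left-hand side once $A$ (resp.\ $K$) is large.

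I would first record two algebraic identities. Since $X_\eps$ and $\partial_r$ are Fourier multipliers they commute, $X_\eps\partial_r=\partial_r X_\eps$; and from $u=(1-\eps\partial_{rr})X_\eps u$ one reads off $\partial_{rr}X_\eps u=\eps^{-1}(X_\eps u-u)$. Second, expanding $U=\tilde Y\,\partial_r\,\tilde Y^{-1}$ with $\tilde Y=(1+r^2)^{-3/4}$ gives $Uu=\partial_r u+b\,u$, where $b(r)=\frac{3r}{2(1+r^2)}$ is smooth, bounded, with bounded derivatives. Hence
\[
S_\eps u=X_\eps U u=\partial_r X_\eps u+X_\eps(b\,u),
\]
and every claimed inequality reduces to estimating these two pieces.

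For the first inequality, I would bound $\|\sigma_A\partial_r X_\eps u\|_{L^2}=\|\sigma_A X_\eps\partial_r u\|_{L^2}$ by applying Lemma \ref{lem:comut 1} with $g=\sigma_A$, $f=\partial_r u$: the two $\eps$-remainder terms are $\lesssim(\eps^{1/2}A^{-1}+\eps A^{-2})\|\sigma_A X_\eps\partial_r u\|_{L^2}$ by the pointwise weight bounds, hence absorbable for $A$ large, while the main term $\|X_\eps(\sigma_A\partial_r u)\|_{L^2}$ is handled by writing $\sigma_A\partial_r u=\partial_r(\sigma_A u)-(\partial_r\sigma_A)u$ and using $\|\partial_r X_\eps h\|_{L^2}\le\eps^{-1/2}\|h\|_{L^2}$, giving $\lesssim\eps^{-1/2}\|\sigma_A u\|_{L^2}$. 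The term $\|\sigma_A X_\eps(b\,u)\|_{L^2}$ is similar but easier: Lemma \ref{lem:comut 1}, $\|X_\eps\|_{L^2\to L^2}\le1$ and $\|b\|_{L^\infty}<\infty$ give $\lesssim\|\sigma_A u\|_{L^2}\le\eps^{-1/2}\|\sigma_A u\|_{L^2}$ (since $\eps$ is small). For the second inequality I differentiate once more, $\partial_r S_\eps u=\partial_{rr}X_\eps u+\partial_r X_\eps(b\,u)$: the first term equals $\eps^{-1}(X_\eps u-u)$, so with $\|\sigma_A X_\eps u\|_{L^2}\lesssim\|\sigma_A u\|_{L^2}$ (Lemma \ref{lem:comut 1} once more, same absorption) it is $\lesssim\eps^{-1}\|\sigma_A u\|_{L^2}$; the second term equals $X_\eps\partial_r(b\,u)$ and is treated exactly as $\|\sigma_A X_\eps\partial_r u\|_{L^2}$ above, with $\partial_r u$ replaced by $\partial_r(b\,u)=b'u+b\,\partial_r u$ and $b,b'$ bounded, giving $\lesssim\eps^{-1/2}\|\sigma_A u\|_{L^2}$. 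Finally the last inequality is proved exactly as the first, with $\sigma_A$ replaced by $g=\sigma_B^2\rho_K^2$ and the absorption now performed for $K$ large using $|g'|\lesssim K^{-1}g$, $|g''|\lesssim K^{-2}g$.

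The one genuinely delicate point — the step I expect to be the main obstacle — is the absorption bookkeeping: one must verify that in each use of Lemma \ref{lem:comut 1} the quantity being estimated reappears on the right only with a uniformly small prefactor ($\eps^{1/2}A^{-1}+\eps A^{-2}$, or $\eps^{1/2}K^{-1}+\eps K^{-2}$), and that these quantities are a priori finite (which holds since $u\in H^1$ and $X_\eps$ maps $L^2$ into $H^2$), so that the absorption is legitimate. The remaining manipulations are routine.
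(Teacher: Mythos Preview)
Your proposal is correct and follows essentially the same route as the paper's proof: decompose $S_\eps u=X_\eps(\partial_r u)+X_\eps(\nu u)$ with $\nu=b$, use Lemma~\ref{lem:comut 1} together with the pointwise weight bounds to absorb the commutator remainders, and conclude via Lemma~\ref{lem:xeps} after writing $\sigma_A\partial_r u=\partial_r(\sigma_A u)-(\partial_r\sigma_A)u$. Your use of the algebraic identity $\partial_{rr}X_\eps u=\eps^{-1}(X_\eps u-u)$ for the second estimate is a minor (and clean) variant of the paper's terser ``Lemma~\ref{lem:comut 1} and Lemma~\ref{lem:xeps} again''; the paper instead absorbs using $\eps$-smallness alone ($|\partial_r\sigma_A|\lesssim\sigma_A$ with constant $1$ rather than your sharper $A^{-1}$), but both routes are equivalent in substance.
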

\begin{proof}
We will show the first and second estimate only the proof of the third estimate being similar. We have
\[
U=\partial_r+ \nu, \quad \nu(r)=\frac{3}{2} \frac{r}{1+r^2}.
\]
Then 
\[
\sigma_A S_\eps u=\sigma_A X_\eps(\partial_r u)+\sigma_A X_\eps (\nu u).
\]
We have for large $A$
\[
|\partial_r \sigma_A|\lesssim \sigma_A, \quad |\partial_{rr} \sigma_A|\lesssim \sigma_A.
\]
Using Lemma \ref{lem:comut 1} with $g=\sigma_A$ we get
\[
\|\sigma_A S_\eps u\|_{L^2}\lesssim \| X_\eps(\sigma_A \partial_r u)\|_{L^2}+\|X_\eps (\sigma_A \nu u)\|_{L^2},
\]
to see this, notice that for the first term:
\[
\|\sigma_A X_\ve( \partial_r u)\|_{L^2}\lesssim \|X_\ve(\sigma_A \partial_r u)\|_{L^2}+\eps^{1/2}\|\partial_r \sigma_A X_\eps \partial_r u\|_{L^2}+\eps \|\partial_{rr}\sigma_A X_\eps \partial_r u\|_{L^2}.
\]
Now for big enough $A$ we can use $|\partial_r \sigma_A|\lesssim \sigma_A$, and $|\partial_{rr} \sigma_A|\lesssim \sigma_A$. Thus, we obtain
\[
\|\sigma_A X_\eps (\partial_r u)\|_{L^2}(1-\eps^{1/2}-\eps)\lesssim \| X_\eps (\sigma_A \partial_r u)\|_{L^2}.
\]
Then, for $\eps$ such that $\eps^{1/2}+\eps \leq 1/2$ we obtain
\[
\| \sigma_A X_\eps(\partial_r u)\|_{L^2} \lesssim \| X_\eps (\sigma_A \partial_r u)\|_{L^2}.
\]
The estimate for the second term follows by a similar argument.
Since
\[
X_\eps(\sigma_A \partial_r u)=\partial_r X_\eps(\sigma_A u)- X_\eps(\partial_r\sigma_A u),
\]
the first estimate follows now from this and Lemma \ref{lem:xeps}. 

To show the second estimate we use
\[
\begin{aligned}
\sigma_A\partial_r (S_\eps u)& =\sigma_A\left( \partial_{rr} X_\eps u+\partial_r X_\eps (\nu u)\right),
\end{aligned}
\]
Lemma \ref{lem:comut 1} and  Lemma \ref{lem:xeps} again. 
\end{proof}

The following Lemma is a modification of Lemma 4.6 in \cite{KMMV}.
\begin{lemma}\label{lem:4.6}
It holds 
\[
\|\rho_K w\|_{L^2}^2\lesssim K^2\|\partial_r w\|_{L^2}^2+K\int(-\Phi_B P_1')  w^2.
\]
\end{lemma}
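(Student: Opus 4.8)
The goal is a weighted Hardy-type inequality: the localized $L^2$ mass $\|\rho_K w\|_{L^2}^2$ is controlled by $K^2$ times the full Dirichlet energy plus $K$ times a weighted integral against the repulsive potential $-\Phi_B P_1'$. The natural strategy is to integrate by parts, using $\rho_K^2$ as a multiplier, and to feed in the positivity and decay of $-\Phi_B P_1'$ from \eqref{cond:repuls} to absorb the boundary/error terms. Since $\Phi_B$ behaves like $r$ near the origin and saturates at a constant for $|r|\gtrsim B$, while $P_1'(r)=-\tfrac{3}{2}r(3+r^2)(1+r^2)^{-3}$, the weight $-\Phi_B P_1'$ is nonnegative, of size $\sim r^2/(1+r^2)^3$ for $|r|\lesssim B$ and $\sim |r|^{-3}$ for large $r$, so it is genuinely a good (positive, integrable) weight but decays fast; this is exactly the algebraic-decay feature the introduction warns about, and it is why we cannot simply dominate $\rho_K^2$ by a constant multiple of $-\Phi_B P_1'$.

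The plan in steps. First I would split $\mathbb{R}$ into the core region $|r|\le 1$ and the tail $|r|\ge 1$ (or more precisely use a smooth cutoff), since $\rho_K=\sech(r/K)\sim 1$ on $|r|\lesssim K$ and the two regimes of $-\Phi_B P_1'$ meet around $|r|\sim 1$. On the tail, write $\rho_K^2 w^2$ as a multiple of $(-\Phi_B P_1')\, w^2$ times the ratio $\rho_K^2/(-\Phi_B P_1')$; for $1\le |r|\lesssim K$ this ratio is $\lesssim (1+r^2)^3/r^2 \lesssim K^4$, and a cruder bound combined with the decay of $\rho_K$ handles $|r|\gtrsim K$, giving a contribution $\lesssim K\int(-\Phi_B P_1')w^2$ after choosing the exponents to match the claimed powers of $K$. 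Second, on the core $|r|\le 1$ I would use the one-dimensional Hardy/Poincaré-type bound: since $w$ is defined on $\R$ and the relevant norms are finite, $\int_{-1}^1 w^2 \lesssim \int_{-1}^1 w_r^2 + (\text{value of } w \text{ at } r=\pm 1)$, and the pointwise value is itself controlled by the full $\|\partial_r w\|_{L^2}$ plus a weighted term; since $K\gtrsim 1$ this core piece is absorbed into $K^2\|\partial_r w\|_{L^2}^2$. Alternatively — and this is probably cleaner, matching Lemma 4.6 in \cite{KMMV} — use the identity $\int \rho_K^2 w^2 = -\int \big(\int_r^{\infty}\rho_K^2\big)' \cdots$, i.e.\ integrate by parts with an antiderivative of $\rho_K^2$ (which is $\sim K\tanh(r/K)$, bounded by $K$), producing $\int \rho_K^2 w^2 \lesssim K\int |w||w_r| \lesssim K\|w_r\|_{L^2}\,\|w\|_{L^2(\mathrm{supp})}$, then close the resulting quadratic inequality in $\|w\|$ against the potential term.

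Concretely, the cleanest route: let $\Theta_K(r)=\int_0^r \rho_K^2(y)\,dy$, so $|\Theta_K|\lesssim K$ and $\Theta_K'=\rho_K^2$. Then
\[
\int \rho_K^2 w^2 = \int \Theta_K' w^2 = -2\int \Theta_K\, w\, w_r \le 2\|\Theta_K w_r\|_{L^2}\,\|w\|_{L^2_{\mathrm{loc}}} \lesssim K\,\|w_r\|_{L^2}\,\|\rho_{CK}\, w\|_{L^2},
\]
using that $\Theta_K$ is supported-weighted like $\rho_K$ up to constants. By Young's inequality this is $\le \tfrac12\|\rho_{CK}w\|_{L^2}^2 + C K^2\|w_r\|_{L^2}^2$; after re-scaling $K$ and absorbing, one reduces to estimating $\|\rho_{CK}w\|_{L^2}^2$, and iterating / comparing with the potential weight on the tail gives the $K\int(-\Phi_B P_1')w^2$ term. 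The main obstacle I anticipate is not any single estimate but the bookkeeping of constants and the precise interplay $B\gg K$: one must ensure that $\Phi_B$ is already $\sim B \gg K$ (hence $\gtrsim K$, in fact $\gtrsim 1$) on the region $|r|\gtrsim 1$ where it matters, so that $-\Phi_B P_1' \gtrsim -P_1' \sim (1+r^2)^{-2}$ there, and that the trade-off between the power of $K$ gained from $\Theta_K$ and the power lost in the ratio $\rho_K^2/(-\Phi_B P_1')$ lands exactly on $K^2$ and $K^1$ as stated — getting a worse power of $K$ here would weaken all the downstream virial estimates. A careful dyadic decomposition in $|r|$, tracking where $\rho_K$, $\Phi_B$, and $(1+r^2)^{-2}$ each transition, will make these powers transparent.
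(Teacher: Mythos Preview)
Your ``cleanest route'' has a genuine gap. You set $\Theta_K(r)=\int_0^r\rho_K^2 = K\tanh(r/K)$, integrate by parts to get $\int\rho_K^2 w^2 = -2\int\Theta_K\, w\, w_r$, and then claim this is $\lesssim K\|w_r\|_{L^2}\|\rho_{CK}w\|_{L^2}$ ``using that $\Theta_K$ is supported-weighted like $\rho_K$ up to constants''. But $\Theta_K$ is \emph{not} weighted like $\rho_K$: it tends to $\pm K$ at infinity and does not decay at all. Cauchy--Schwarz therefore only yields $2K\|w_r\|_{L^2}\|w\|_{L^2}$ with the \emph{unweighted} $\|w\|_{L^2}$, which you do not control; any attempt to push a decaying weight onto the $w$ factor fails since $\Theta_K/\rho_K = K\sinh(r/K)$ blows up. Your alternative ratio comparison on the tail also does not recover the stated factor $K$: for $1\ll|r|\lesssim K$ one has $-\Phi_B P_1'\sim r^{-2}$ and $\rho_K^2\sim 1$, so the ratio $\rho_K^2/(-\Phi_B P_1')$ is of order $r^2$ and peaks at $K^2$, giving at best $K^2\int(-\Phi_B P_1')w^2$.

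The paper's argument uses a different device. For each fixed $y\in\R$, integrating $2\int_y^\infty w\, w_r\, e^{-2(r-y)/K}\,dr$ by parts gives
\[
\frac{1}{K}\int_y^\infty w^2\, e^{-2(r-y)/K}\,dr \lesssim K\int_y^\infty (\partial_r w)^2\,dr + w^2(y),
\]
and similarly on $(-\infty,y]$, so that $\tfrac{1}{K}\int w^2 e^{-2|r-y|/K}\,dr \lesssim K\|\partial_r w\|_{L^2}^2 + w^2(y)$. The shifted exponential \emph{does} decay, and the price is the pointwise value $w^2(y)$. Now multiply by the nonnegative integrable weight $-\Phi_B(y)P_1'(y)$ and integrate in $y$: the right side becomes $K\|\partial_r w\|_{L^2}^2\cdot\|{-\Phi_B P_1'}\|_{L^1} + \int(-\Phi_B P_1')w^2$, while on the left one uses $e^{-2|r-y|/K}\ge e^{-2|r|/K}e^{-2|y|/K}$ to factor, together with the fact that $\int(-\Phi_B P_1')e^{-2|y|/K}\,dy$ is bounded below by a positive constant independent of $K$. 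This produces exactly the powers $K^2$ and $K$ in the statement. The idea you are missing is to generate the pointwise term $w^2(y)$ first and only then pair it with the potential weight by integrating in $y$.
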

\begin{proof}
For any $y\in \R$ we have
\[
2\int_y^\infty \partial_r w w e^{\,-2(r-y)/K}\,\ud r=-w^2(y)+ \int_y^\infty w^2 \partial_r\left(e^{\,-2(r-y)/K}\right)\,\ud r.
\]
In the last integral we have 
\[
 \partial_r\left(e^{\,-2(r-y)/K}\right)=   -\frac{2}{K}e^{\,-2(r-y)/K},
\]  
Rearranging and using the Cauchy-Schwarz inequality we get
\[
\frac{1}{K} \int_y^\infty w^2  e^{\,-2(r-y)/K}\,\ud r\lesssim K \int_y^{\infty}  |\partial_r w|^2\,\ud r+ w^2(y).
\]
Arguing similarly we get
\[
\frac{1}{K} \int^y_{-\infty} w^2  e^{\,-2(r-y)/K}\,\ud r\lesssim K \int^y_{-\infty}  |\partial_r w|^2\,\ud r+  w^2(y).
\]
Adding the above we obtain
\[
\frac{1}{K} \int w^2   e^{\,-2|r-y|/K}\,\ud r\lesssim K \int  |\partial_r w|^2\,\ud r+  w^2(y).
\]
Using
\[
\frac{1}{K} \int w^2  e^{\,-2|r-y|/K}\,\ud r\geq \frac{e^{\,-|y|/K}}{K} \int w^2  e^{\,-|r|/K}\,\ud r,
\]
and multiplying by $-\Phi_B(y) P'_1(y)\geq 0$ (c.f. (\ref{cond:repuls})) we get
\[
\begin{aligned}
(-\Phi_B(y)P'_1(y))\frac{e^{\,-2|y|/K}}{K} \int w^2  e^{\,-2|r|/K}\,\ud r&\lesssim K (-\Phi_B(y) P'(y))\int  |\partial_r w|^2\,\ud r\\
&\quad+ (-\Phi_B(y) P'(y))  w^2(y).
\end{aligned}
\]
Since $(- \Phi_B P'_1)\in L^1(\R)$ integrating the above inequality we conclude. 

\end{proof}

\section{Third virial estimate}
\setcounter{equation}{0}

We will transform (\ref{eq: w}) by setting $u_j=S_\eps w_j$. The transformed problem takes form 
\begin{equation}
\label{eq:u}
\begin{aligned}
\dot u_1&=u_2,\\
\dot u_2&=-\tilde L_1 u_1-\left[X_\eps, P_1\right] U w_1+S_\eps\left[(1+r^2)^{1/4}N\left(\frac{w_1}{(1+r^2)^{1/4}}, r\right)\right].
\end{aligned}
\end{equation}
Define 
\[
\mathcal J=\int\left(\Psi_{A,B} \partial_r u_1+\frac{1}{2}\Psi'_{A,B} u_1\right)u_2.
\]
We take time derivative of $\mathcal J$ and use (\ref{eq:vir0}). Setting 
\[
\tilde u_1= \chi_A\zeta_{B} u_1, 
\]
we get from (\ref{eq:vir1}) and calculations in section 4.3 in \cite{KMM4}
\begin{equation}
\label{eq:vir 2}
\begin{aligned}
\dot{\mathcal J}=&-\int (\partial_r \tilde u_1)^2-\frac{1}{2} \int \left(\frac{\zeta_{B}''}{\zeta_{B}} - \frac{(\zeta_{B}')^2}{\zeta_{B}^2}\right) \tilde u_1^2
+\frac 12 \int \frac{\Phi_{B} P_1'}{\zeta_{B}^2} \tilde u_1^2\\
&\quad+\frac14 \int (\chi_A^2)' (\zeta_B^2)' u_1^2 +\frac12 \int \left[3(\chi_A')^2 + \chi_A'' \chi_A\right]\zeta_B^2 u_1^2\\
&\quad- \int (\chi_A^2)' \Phi_B (\partial_r u_1)^2+\frac14 \int (\chi_A^2)''' \Phi_B u_1^2.
\\
&\quad -\int \left[X_\eps, P_1\right] U w_1\left(\Psi_{A,B} \partial_r u_1+\frac{1}{2}\Psi_{A,B}' u_1\right)\\
&\quad  + \int S_\eps\left[(1+r^2)^{1/4}N\left(\frac{w_1}{(1+r^2)^{1/4}}, r\right)\right] \left(\Psi_{A,B} \partial_r u_1+\frac{1}{2}\Psi_{A,B}' u_1\right)
\\
&=J_1+J_2+J_3+J_4+J_5.
\end{aligned}
\end{equation}
As before we need to estimate consecutive lines. 

Considering $J_1$ we write $J_1=-J_{11}-J_{12}$ where
\[
\begin{aligned}
J_{11}&=\int (\partial_r \tilde u_1)^2+\frac 12 \int \frac{(-\Phi_{B} P_1')}{\zeta_{B}^2} \tilde u_1^2,\\
J_{12}&=\frac{1}{2} \int \left(\frac{\zeta_{B}''}{\zeta_{B}} - \frac{(\zeta_{B}')^2}{\zeta_{B}^2}\right) \tilde u_1^2.
\end{aligned}
\]
We begin with $J_{11}$ which is in some sense the dominant term. For convenience we denote
\[
V_B=-\frac{1}{2} \Phi_{B} P_1'.
\]
and recall that by $V_B\geq 0$ by (\ref{cond:repuls}). 
Note that by Lemma \ref{lem:4.6}
\[
\begin{aligned}
J_{11}&\gtrsim\frac{1}{2} \int (\partial_r \tilde u_1)^2+\frac{1}{2}   J_{11}\\
&\gtrsim \frac{1}{2} \int (\partial_r \tilde u_1)^2+\frac{1}{2} \int (\partial_r \tilde u_1)^2+\frac{1}{2}\int V_B \tilde u_1^2\\
&\gtrsim \frac{1}{2} \int (\partial_r \tilde u_1)^2+\frac{1}{K^2}\int \rho_K^2  \tilde u_1^2 \\
&\gtrsim \frac{1}{4} \int (\partial_r \tilde u_1)^2+\frac{\eps}{K^2} \int \rho_K^2 (\partial_r \tilde u_1)^2+\frac{1}{K^2}\int \rho_K^2  \tilde u_1^2.
\end{aligned}
\]
Using the definition of $\tilde u_1$ we get
\[
\begin{aligned}
\int \rho_K^2  \tilde u_1^2&=\int \rho_K^2 \zeta^2_B u_1^2+\int  \rho_K^2 \zeta^2_B(1-\chi^2_A) u_1^2,\\
\int \rho^2_K(\partial_r \tilde u_1)^2&=\int \rho^2_K\zeta^2_B(\partial_r u_1)^2-\int\rho_K^2\partial_r\left(\zeta_B\chi_A\partial_r(\zeta_B\chi_A)\right) u_1^2\\
&\quad+\int\rho_K^2  \left(\partial_r(\zeta_B\chi_A)\right)^2 u_1^2+\int \rho_K^2\zeta^2_B(\partial_r u_1)^2 (1-\chi_A^2).
\end{aligned}
\]
Let us set 
\[
\begin{aligned}
I_{11}&=\frac{1}{K^2}\int \rho_K^2 \zeta^2_B u_1^2+\frac{\eps}{K^2}\int \zeta^2_B(\partial_r u_1)^2-\int\rho_K^2  \chi^2_A\zeta_B\partial_{rr}\zeta_B u_1^2,\\
I_{12}&=\int  \rho_K^2 \zeta^2_B(1-\chi^2_A) u_1^2-\int\rho_K^2\zeta_B\chi_A\partial_{rr}(\zeta_B\chi_A) u_1^2+\int\rho_K^2  \chi^2_A\zeta_B\partial_{rr}\zeta_B u_1^2+\int \zeta^2_B (1-\chi_A^2)(\partial_r u_1)^2.
\end{aligned}
\]
Now, recall that $u_1=S_\eps w_1$. Note that 
\[
\left|\int\rho_K^2  \chi^2_A\zeta_B\partial_{rr}\zeta_B u_1^2\right|\lesssim \frac{1}{B^2}\int \rho_K^2 \zeta^2_B u_1^2.
\]
From Lemma \ref{lem:transfer} we get as long as $B\gg K$:
\[
I_{11}\gtrsim \frac{1}{K^2} \left(\int \rho_K^2 \sigma^2_B u_1^2+\eps\int \rho_K^2\sigma^2_B(\partial_r u_1)^2\right)\gtrsim\frac{1}{K^3} \|\sigma_B^2 \rho_K^2 w_1\|_{L^2}^2.
\]

Next we will estimate the remaining terms in the definition of  $J_1$. We begin with $J_{12}$. Recalling
\[
\left|\frac{\zeta_{B}''}{\zeta_{B}} - \frac{(\zeta_{B}')^2}{\zeta_{B}^2}\right|\lesssim \frac{1}{B^2} \mathbbm{1}_{[-2,2]},
\]
and using Lemma \ref{lem:coerc} we find
\begin{equation}
\label{est:j12}
|J_{12}| \lesssim \frac{1}{B^2}\int\sigma_B^4\rho_K^4 (S_\eps w_1)^2\lesssim \frac1{B^2\eps} \|\sigma_B^2\rho_K^2 w_1\|_{L^2}^2.
\end{equation}
We consider now $I_{12}$. We have
\[
\begin{aligned}
\left|\int  \rho_K^2 \zeta^2_B(1-\chi^2_A) u_1^2\right| & \lesssim e^{\,-A/K} \int \sigma_A^2 (S_\eps w_1)^2\lesssim \eps^{-1} e^{\,-A/K} \|\sigma_A w_1\|^2_{L^2},\\
\left|\int \rho_K^2\zeta_B\chi_A\partial_r\zeta_B\partial_r\chi_A u_1^2\right| & \lesssim e^{\,-A/B} \int \sigma_B u_1^2\lesssim e^{\,-A/B} \int \sigma_A^2 (S_\eps w_1)^2\lesssim \eps^{-1} e^{\,-A/B}\|\sigma_A w_1\|^2_{L^2},
\\
\left|\int \rho_K^2\zeta_B^2 \chi_A \partial_{rr}\chi_A u_1^2\right| & \lesssim e^{\,-A/B} \int \sigma_B u_1^2\lesssim \eps^{-1} e^{\,-A/B} \|\sigma_A w_1\|^2_{L^2},\\
\left|\int \rho_K^2 \zeta^2_B (1-\chi_A^2)(\partial_r u_1)^2\right| &\lesssim e^{\,-A/B} \int \sigma_B (\partial_r u_1)^2\lesssim \eps^{-2} e^{\,-A/B} \|\sigma_A w_1\|^2_{L^2}.
\end{aligned}
\]
It follows, when $K\ll B\ll A$,
\begin{equation}
\label{est:jone}
-J_1=J_{11}+J_{12}\gtrsim \frac{1}{4} \int \sigma^2_B(\partial_r u_1)^2+\left(K^{-3}-\eps^{-1}B^{-2}\right)\|\sigma_B^2 \rho_K^2 w_1\|_{L^2}^2-\eps^{-2} e^{\,-A/B}\|\sigma_A w_1\|^2_{L^2}.
\end{equation}

To estimate of $J_2$ we follow similar lines as in the bound  for $I_{12}$ to obtain
\begin{equation}
\label{est:jtwo}
|J_2|\lesssim \eps^{-1} e^{\,-A/B}\|\sigma_A w_1\|^2_{L^2}.
\end{equation}

Now we consider $J_3$. We have
\[
\left| \int (\chi_A^2)' \Phi_B (\partial_r u_1)^2\right|\lesssim \frac{B}{A} \int \sigma^2_A (\partial_r S_\eps w_1)^2.
\]
We write
\[
\partial_r S_\eps w_1=\partial_r X_\eps U w_1=\partial_r X_\eps \partial_r w_1+X_\eps \partial_r(\nu w_1)=X_\eps \partial_{rr} w_1+X_\eps \partial_r(\nu w_1).
\]
Using  Lemma \ref{lem:comut 1} with $g=\sigma_A$ we obtain
\begin{equation}
\label{est:sax one}
\begin{aligned}
\|\sigma_A X_\eps \partial_{rr} w_1\|_{L^2}^2 & \lesssim \|X_\eps(\sigma_A \partial_{rr} w_1)\|_{L^2}^2
\\
& \lesssim  \|X_\eps\partial_r(\sigma_A \partial_{r} w_1)\|_{L^2}^2+\|X_\eps(\partial_r\sigma_A \partial_{r} w_1)\|_{L^2}^2
\\
&\lesssim \eps^{-1} \|\sigma_A \partial_{r} w_1\|_{L^2}^2.
\end{aligned}
\end{equation}
We also get
\begin{equation}
\label{est:sax two}
\|\sigma_A X_\eps \partial_r(\nu w_1)\|^2_{L^2}\lesssim \|\sigma_A \partial_r(\nu w_1)\|^2_{L^2}\lesssim \|\sigma_A \partial_r\nu w_1\|^2_{L^2}+ \|\sigma_A \nu \partial_r w_1\|^2_{L^2}.
\end{equation}
The most complicated term is 
\begin{equation}
\label{est:sax threee}
\|\sigma_A \partial_r\nu w_1\|^2_{L^2}=\int \sigma^2_A (\partial_ r\nu)^2 w_1^2.
\end{equation}
We have
\[
(\partial_r\nu)^2=\frac{9}{4}\frac{(1-r^2)^2}{(1+r^2)^4}\lesssim \frac{\Phi_A}{r} \frac{1}{1+r^2},
\]
hence
\begin{equation}
\label{est:sax three}
\|\sigma_A \partial_r\nu w_1\|^2_{L^2}\lesssim \int \frac{\Phi_A}{r} \frac{1}{1+r^2} w_1^2.
\end{equation}
We also have
\[
\left|\int (\chi_A^2)''' \Phi_B u_1^2\right|\lesssim \frac{B}{A^3}\|\sigma_A S_\eps w_1\|^2_{L^2}\lesssim \frac{B}{\eps A^3}\|\sigma_Aw_1\|^2_{L^2}.
\]
Combing the above estimates we get
\begin{equation}
\label{est:jthree}
\begin{aligned}
|J_3|\lesssim \left| \int (\chi_A^2)' \Phi_B (\partial_r u_1)^2\right|+\left|\int (\chi_A^2)''' \Phi_B u_1^2\right|&\lesssim \frac{B}{\eps A} \|\sigma_A \partial_{r} w_1\|_{L^2}^2+\frac{B}{A} \int \frac{\Phi_A}{r} \frac{1}{1+r^2} w_1^2\\
&\quad +\frac{B}{\eps A^3}\|\sigma_Aw_1\|^2_{L^2}.
\end{aligned}
\end{equation}
Now will estimate $J_4$. We write 
\[
J_4=J_{41}+J_{42},
\]
where
\[
\begin{aligned}
J_{41}&=\int \left(\left[X_\eps, P_1\right] U w_1\right)\Psi_{A,B} \partial_r u_1,
\\
J_{42}&= \int \left(\left[X_\eps, P_1\right] \right)U w_1 \Psi_{A,B}' u_1.
\end{aligned}
\]
It is not hard to show that 
\begin{equation}
\label{eq:comut 1}
\left[X_\eps, P_1\right] U w_1=2\eps X_\eps(\partial_r P_1 \partial_r S_\eps w_1)+\eps X_\eps(\partial_{rr} P_1 S_\eps w_1).
\end{equation}
Recalling that $u_1=S_\eps w_1$ we have
\[
J_{41}=2\eps \int X_\eps(\partial_r P_1 \partial_r u_1)  \Psi_{A,B} \partial_r u_1+\eps\int X_\eps(\partial_{rr} P_1 u_1) \Psi_{A,B} \partial_r u_1=2\eps I_{41}+\eps I_{42}.
\]
To estimate $I_{41}$ we decompose
\[
I_{41}=\int X_\eps(\partial_r P_1 \partial_ru_1)  \Psi_{A,B}\chi_B^2 \partial_r u_1+\int X_\eps(\partial_r P_1 \partial_ru_1)  \Psi_{A,B}(1-\chi_B^2) \partial_r u_1.
\]
We have
\[
\begin{aligned}
\left|\int X_\eps(\partial_r P_1 u_1)  \Psi_{A,B}\chi_B^2  \partial_ru_1\right| & \lesssim B\|\chi_B X_\eps(\partial_r P_1 \partial_ru_1)\|_{L^2}\|\chi_B \partial_r u_1\|_{L^2}\\
&\lesssim B\|\sigma_B X_\eps(\partial_r P_1\partial_r u_1)\|_{L^2}\|\sigma_B \partial_r u_1\|_{L^2}\\
&\lesssim B\|\sigma_B \partial_r P_1 \partial_ru_1\|_{L^2}\|\sigma_B \partial_r u_1\|_{L^2}\\
&\lesssim B\|\sigma_B \partial_r u_1\|^2_{L^2}.
\end{aligned}
\]
To estimate the second term in the decomposition  of $I_{41}$ we define 
\[
\rho(r)=(1+r^2)^{-\frac{3}{2}},
\]
and write the following bound
\begin{equation}
\label{est:i41two}
\begin{aligned}
\left|\int X_\eps(\partial_r P_1 u_1)  \Psi_{A,B}(1-\chi_B^2) \partial_r u_1 \right| & \lesssim  \|(1-\chi_B^2)^{\frac{1}{2}}\Psi^{\frac{1}{2}}_{A,B}\rho^{-1} X_\eps (\partial_r P_1 \partial_r u_1)\|_{L^2} \\
&\quad \times \|(1-\chi_B^2)^{\frac{1}{2}}\Psi^{\frac{1}{2}}_{A,B}\rho \partial_r u_1\|_{L^2}.
\end{aligned}
\end{equation}
We have, arguing similarly as in  (\ref{est:sax one})--(\ref{est:sax three})
\[
\begin{aligned}
\|(1-\chi_B^2)^{\frac{1}{2}}\Psi^{\frac{1}{2}}_{A,B}\rho \partial_r u_1\|^2_{L^2}&\lesssim \frac{1}{B^2} \|\sigma_A \partial_r S_\eps w_1\|^2_{L^2}\\
&\lesssim \frac{1}{\eps B^2} \|\sigma_A \partial_{r} w_1\|_{L^2}^2+\frac{1}{B^2} \int \frac{\Phi_A}{r} \frac{1}{1+r^2} w_1^2.
\end{aligned}
\]
As for the remaining term on the right hand side of (\ref{est:i41two}) we have using Lemma \ref{lem:comut 1}
\[
\begin{aligned}
\|(1-\chi_B^2)^{\frac{1}{2}}\Psi^{\frac{1}{2}}_{A,B}\rho^{-1} X_\eps (\partial_r P_1 \partial_r u_1)\|^2_{L^2} &\lesssim B\|\sigma_A\rho^{-1} X_\eps(\partial_r P_1 \partial_r u_1)\|^2_{L^2}
\\
&\lesssim B \|X_\eps (\sigma_A \rho^{-1} \partial_r P_1 \partial_r u_1)\|^2_{L^2}\\
&\lesssim B \|\sigma_A \rho^{-1} \partial_r P_1 \partial_r u_1\|^2_{L^2}\\
&\lesssim \frac{1}{B^2} \|\sigma_A \partial_r S_\eps w_1\|^2_{L^2}\\
&\lesssim \frac{1}{\eps B^2} \|\sigma_A \partial_{r} w_1\|_{L^2}^2+\frac{1}{B^2} \int \frac{\Phi_A}{r} \frac{1}{1+r^2} w_1^2.
\end{aligned}
\]
Summarizing we have
\begin{equation}
\label{est: i41}
\begin{aligned}
|I_{41}|&\lesssim B\|\sigma_B \partial_r u_1\|^2_{L^2}+\frac{1}{\eps B^2} \|\sigma_A \partial_{r} w_1\|_{L^2}^2+\frac{1}{B^2} \int \frac{\Phi_A}{r} \frac{1}{1+r^2} w_1^2.
\end{aligned}
\end{equation}

Similarly we write
\[
|I_{42}|=\left|\int X_\eps(\partial_{rr} P_1 u_1)\rho^{-1} \Psi_{A,B}\rho \partial_r u_1\right|\lesssim  \|X_\eps(\partial_{rr}P_1 u_1) \Psi_{A,B}^{\frac{1}{2}} \rho^{-1}\|_{L^2}\|\partial_r u_1 \Psi_{A,B}^{\frac{1}{2}} \rho\|_{L^2}.
\]
We have
\[
\|\partial_r u_1 \Psi_{A,B}^{\frac{1}{2}} \rho\|_{L^2}\lesssim \|\partial_r u_1 \Psi_{A,B}^{\frac{1}{2}}\rho\sigma_B\|_{L^2}+\|\partial_r u_1 \Psi_{A,B}^{\frac{1}{2}} \rho(1-\chi_B^2)^{\frac{1}{2}}\|_{L^2}.
\]
The first term on the right is easy to control
\[
\|\partial_r u_1 \Psi_{A,B}^{\frac{1}{2}}\rho\sigma_B\|_{L^2}\lesssim  B^{\frac{1}{2}}\|\partial_r u_1 \sigma_B\|_{L^2}.
\]
The second term is estimated similarly as in (\ref{est:sax one})--(\ref{est:sax three})
\[
\begin{aligned}
\|\partial_r u_1 \Psi_{A,B}^{\frac{1}{2}} \rho(1-\chi_B^2)^{\frac{1}{2}}\|_{L^2}&\lesssim \frac{1}{B} \|\partial_r u_1\sigma_A\|_{L^2}\\
&\lesssim \frac{1}{B\eps^{\frac{1}{2}}} \|\partial_r w_1\sigma_A\|_{L^2}+\frac{1}{B}\left(\int \frac{\Phi_A}{r}\frac{1}{1+r^2} w_1^2\right)^{\frac{1}{2}}.
\end{aligned}
\]
Next we use Lemma \ref{lem:comut 1} to estimate
\[
\|X_\eps(\partial_{rr} P_1 u_1) \Psi_{A,B}^{\frac{1}{2}} \rho^{-1}\|_{L^2}\lesssim B^{\frac{1}{2}}\|X_\eps(\partial_{rr} P_1 u_1) \sigma_A \rho^{-1}\|_{L^2}\lesssim B^{\frac{1}{2}}\|u_1\partial_{rr} P_1   \rho^{-1} \sigma_A\|_{L^2}.
\]
We note that 
\[
|\partial_{rr} P_1|\rho^{-1}\lesssim \frac{1}{(1+r^2)^2}\rho^{-1}\lesssim \frac{1}{(1+r^2)^{\frac{1}{2}}}.
\] 
Recall that 
\[
u_1=S_\eps w_1=X_\eps \partial_r w_1+X_\eps(\nu w_1).
\]
Then 
\[
\begin{aligned}
\|u_1\partial_{rr} P_1   \rho^{-1} \sigma_A\|_{L^2}&\lesssim \|X_\eps(\partial_r w_1) (1+r^2)^{-\frac{1}{2}} \sigma_A\|_{L^2}+\|X_\eps(\nu w_1) (1+r^2)^{-\frac{1}{2}} \sigma_A\|_{L^2}\\
&\lesssim \|\partial_r w_1 \sigma_A\|_{L^2}+\| w_1 \nu(1+r^2)^{-\frac{1}{2}} \sigma_A\|_{L^2}\\
&\lesssim \|\partial_r w_1 \sigma_A\|_{L^2}+\left(\int \frac{\Phi_A}{r}\frac{1}{1+r^2} w_1^2\right)^{\frac{1}{2}}.
\end{aligned}
\]
Summarizing,
\begin{equation}
\begin{aligned}
\label{est: i42}
|I_{42}|&\lesssim B\|\partial_r u_1 \sigma_B\|^2_{L^2}+\left(B+\frac{1}{B^2\eps}\right) \|\partial_r w_1\sigma_A\|_{L^2}+\left(B+\frac{1}{B^2}\right)\int \frac{\Phi_A}{r}\frac{1}{1+r^2} w_1^2.
\end{aligned}
\end{equation}

We  turn our attention to the term $J_{42}$. Using (\ref{eq:comut 1}) we write
\[
J_{42}=2\eps I_{43}+\eps I_{44},
\]
where
\[
\begin{aligned}
I_{43}&= \int X_\eps(\partial_r P_1 \partial_r u_1)  \Psi_{A,B}' u_1,
\\
I_{44}&=\int X_\eps(\partial_{rr} P_1 u_1) \Psi_{A,B}' u_1.
\end{aligned}
\]
Recalling the definition of $\Psi_{A,B}$ we have
\[
\int X_\eps(\partial_r P_1 \partial_r u_1)  \Psi_{A,B}' u_1=\frac{2}{A} \int X_\eps(\partial_r P_1 \partial_r u_1)\chi_A \chi'_A\Phi_B u_1+\int X_\eps(\partial_r P_1 \partial_r u_1)\chi^2_A\zeta^2_B u_1.
\]
Consider first
\[
\begin{aligned}
\left| \int X_\eps(\partial_r P_1 \partial_r u_1)\chi_A \chi'_A\Phi_B u_1\right|&\lesssim  B \int |X_\eps(\partial_r P_1 \partial_r u_1)|\rho^{-1}\sigma^2_A  |u_1|\rho
\\
&\lesssim B\left(\int |X_\eps(\partial_r P_1 \partial_r u_1)|^2\rho^{-2}\sigma^2_A\right)^{\frac{1}{2}} \left(\int |u_1|^2\rho^2 \sigma^2_A \right)^{\frac{1}{2}}.
\end{aligned}
\]
Using Lemma \ref{lem:comut 1} we have
\[
\begin{aligned}
\|X_\eps(\partial_r P_1 \partial_r u_1)\rho^{-1}\sigma_A\|_{L^2}^2 & \lesssim \|\partial_r P_1 \partial_r u_1\rho^{-1}\sigma_A\|_{L^2}^2
\\
&\lesssim \|\partial_r u_1 \sigma_A\|^2_{L^2}
\\
&\lesssim \frac{1}{\eps} \|\sigma_A \partial_{r} w_1\|_{L^2}^2+ \int \frac{\Phi_A}{r} \frac{1}{1+r^2} w_1^2.
\end{aligned}
\]
again arguing similarly as in  (\ref{est:sax one})--(\ref{est:sax three}) in the last estimate. 
We also have
\[
\begin{aligned}
\|u_1 \rho\sigma_A\|_{L^2}^2 &\lesssim \|X_\eps(\partial_r w_1) \rho\sigma_A\|^2_{L^2}+\| X_\eps(\nu w_1)\rho\sigma_A\|^2_{L^2}\\
&\lesssim \|\sigma_A \partial_{r} w_1)\|_{L^2}^2+ \int \frac{\Phi_A}{r} \frac{1}{1+r^2} w_1^2.
\end{aligned}
\]
Next we consider
\[
\begin{aligned}
\left|\int X_\eps(\partial_r P_1 \partial_r u_1)\chi^2_A\zeta^2_B u_1\right|&\lesssim \|X_\eps(\partial_r P_1 \partial_r u_1)\rho^{-1}\zeta_B\|_{L^2} \|u_1\rho\zeta_B\|_{L^2}\\
&\lesssim \|\partial_r P_1\partial_r u_1 \rho^{-1} \zeta_B\|^2_{L^2} + \|u_1\rho \sigma_B\|^2_{L^2}\\
&\lesssim \|\partial_r u_1\sigma_B\|^2_{L^2}+\|\sigma_A \partial_{r} w_1\|_{L^2}^2+ \int \frac{\Phi_A}{r} \frac{1}{1+r^2} w_1^2.
\end{aligned}
\]
Thus
\[
\begin{aligned}
|I_{43}|&\lesssim \left(\frac{B}{A\eps}+1\right) \|\sigma_A \partial_{r} w_1\|_{L^2}^2+ \left(\frac{B}{A}+1\right)\int \frac{\Phi_A}{r} \frac{1}{1+r^2} w_1^2+ \|\partial_r u_1\sigma_B\|^2_{L^2}.
\end{aligned}
\]
To estimate $I_{44}$ we argue similarly to get
\[
\begin{aligned}
|I_{44}|&\lesssim \left(\frac{B}{A}+1\right) \|\sigma_A \partial_{r} w_1\|_{L^2}^2+ \left(\frac{B}{A}+1\right)\int \frac{\Phi_A}{r} \frac{1}{1+r^2} w_1^2.
\end{aligned}
\]
Summarizing all this we get
\begin{equation}
\begin{aligned}
\label{est:jfour}
|J_4|&\lesssim (\eps+\eps B)\|\sigma_B \partial_r u_1\|^2_{L^2}+\left(\eps+\frac{B}{A}+\frac{1}{B^2}+\eps B\right) \|\sigma_A \partial_{r} w_1\|_{L^2}^2\\
&\quad +\left(\eps+\frac{B}{A}+\eps B+\frac{\eps}{B^2}\right) \int \frac{\Phi_A}{r} \frac{1}{1+r^2} w_1^2.
\end{aligned}
\end{equation}

It remains to estimate 
\[
J_5=\int S_\eps\left[(1+r^2)^{1/4}N\left(\frac{w_1}{(1+r^2)^{1/4}}, r\right)\right] \left(\Psi_{A,B} \partial_r u_1+\frac{1}{2}\Psi_{A,B}' u_1\right).
\]
We decompose
\[
\begin{aligned}
J_{51}&=\int S_\eps\left[(1+r^2)^{1/4}N\left(\frac{w_1}{(1+r^2)^{1/4}}, r\right)\right] \Psi_{A,B} \partial_r u_1,\\
J_{52}&=\frac{1}{2}\int S_\eps\left[(1+r^2)^{1/4}N\left(\frac{w_1}{(1+r^2)^{1/4}}, r\right)\right] \Psi_{A,B}' u_1.
\end{aligned}
\]
Using the explicit expression for $N$ we write $J_{51}=I_{51}+I_{52}$
\[
\begin{aligned}
I_{51}&=\int S_\eps\left(\frac {-6 H w_1^2}{(1+r^2)^{5/4}}\right) \Psi_{A,B}\partial_r u_1,\\
I_{52}&=\int S_\eps\left(\frac {-2 w_1^3}{(1+r^2)^{3/2}}\right) \Psi_{A,B}\partial_r u_1.
\end{aligned}
\] 
Recall that
\[
\|\partial_r u_1\sigma_A\|_{L^2}^2 \lesssim \left(\eps^{-1} \|\partial_r w_1\sigma_A\|_{L^2}^2+ \int\frac{\Phi_A}{r}\frac{1}{1+r^2} w_1^2\right),
\]
also,
\[
\begin{aligned}
\|S_\eps (w_1^2  H(1+r^2)^{-5/4})\sigma_A\|_{L^2} &\lesssim \| \sigma_A X_\eps (\partial_r (w_1^2 H (1+r^2)^{-5/4}))\|_{L^2}\\
&\quad +\|\sigma_A X_\eps (\nu w_1^2 H(1+r^2)^{-5/4})\|_{L^2}.
\end{aligned}
\]
Using Lemma \ref{lem:xeps} and Lemma \ref{lem:comut 1} we have
\begin{align*}
\| \sigma_A X_\eps (\partial_r (w_1^2 H (1+r^2)^{-5/4}))\|_{L^2}&\lesssim \|\sigma_A \partial_r w_1 w_1 H\ (1+r^2)^{-5/4}\|_{L^2}+\|\sigma_A w_1^2 (1+r^2)^{-11/4}\|_{L^2}\\
&\quad +\|\sigma_A w_1^2 (1+r^2)^{-9/4}\|_{L^2}\\
&\lesssim \norm{\frac{w_1}{(1+r^2)^{1/4}}}_{L^\infty} \| \sigma_A \partial_r w_1\|_{L^2}\\
&\quad + \norm{\frac{w_1}{(1+r^2)^{1/4}}}_{L^\infty}\left(\int\frac{\Phi_A}{r} \frac{w_1^2}{(1+r^2)} \right)^{1/2}.
\end{align*}
On the other hand,
\begin{align*}
\|\sigma_A X_\eps (\nu w_1^2 H(1+r^2)^{-5/4})\|_{L^2} &\lesssim \| \sigma_A \nu w_1^2 H (1+r^2)^{-5/4}\|_{L^2}\\
&\lesssim \norm{\sigma_A \nu \frac{w_1^2}{(1+r^2)^{1/2}}}_{L^2}\\
&\lesssim \norm{\frac{w_1}{(1+r^2)^{1/4}}}_{L^\infty}  \|\sigma_A w_1 \nu\|_{L^2}\\
&\lesssim \norm{\frac{w_1}{(1+r^2)^{1/4}}}_{L^\infty} \left( \int \frac{\Phi_A}{r}\frac{w_1^2}{1+r^2} \right)^{1/2}.
\end{align*}
From the previous estimates:
\begin{align*}
\|S_\eps (w_1^2  H(1+r^2)^{-5/4})\sigma_A\|_{L^2}&\lesssim \norm{\frac{w_1}{(1+r^2)^{1/4}}}_{L^\infty} \left(\|\sigma_A \partial_r w_1\|_{L^2}+\left(\int\frac{\Phi_A}{r}\frac{1}{1+r^2} w_1^2 \right)^{1/2} \right)\\
&\lesssim \norm{\frac{w_1}{(1+r^2)^{1/4}}}_{L^\infty}\left(\|\sigma_A \partial_r w_1\|^2 +\int\frac{\Phi_A}{r}\frac{1}{1+r^2} w_1^2 \right)^{1/2}.
\end{align*}
Combining the above  we get:
\begin{equation}
\label{est:i51}
\begin{aligned}
|I_{51}|&\lesssim B\|S_\eps (w_1^2  H(1+r^2)^{-5/4})\sigma_A\|_{L^2}\|\partial_r u_1\sigma_A\|_{L^2}
\\
&\lesssim  B\norm{\frac{w_1}{(1+r^2)^{1/4}}}_{L^\infty} \left(\eps^{-1} \|\partial_r w_1\sigma_A\|_{L^2}^2+ \int\frac{\Phi_A}{r}\frac{1}{1+r^2} w_1^2\right).
\end{aligned}
\end{equation}

Similarly we estimate $I_{52}$:
\begin{equation}
\label{est:i52}
|I_{52}| \lesssim B\norm{\frac{w_1}{(1+r^2)^{1/4}}}_{L^\infty} \left(\eps^{-1} \|\partial_r w_1\sigma_A\|_{L^2}^2+ \int\frac{\Phi_A}{r}\frac{1}{1+r^2} w_1^2\right).
\end{equation}
Following similar steps as above it is now not hard to prove
\begin{equation}
\label{est:j52}
|J_{52}|\lesssim B\norm{\frac{w_1}{(1+r^2)^{1/4}}}_{L^\infty} \left(\|\partial_r w_1\sigma_A\|_{L^2}^2+ \int\frac{\Phi_A}{r}\frac{1}{1+r^2} w_1^2\right).
\end{equation}
Combining (\ref{est:i51})--(\ref{est:j52}) we obtain
\begin{equation}
\label{est:j5}
|J_5|\lesssim B\norm{\frac{w_1}{(1+r^2)^{1/4}}}_{L^\infty} \left(\eps^{-1} \|\partial_r w_1\sigma_A\|_{L^2}^2+ \int\frac{\Phi_A}{r}\frac{1}{1+r^2} w_1^2\right).
\end{equation}

\section{Proof of Theorem \ref{thm:2}}
\setcounter{equation}{0}
We summarize estimates (\ref{est:jone}), (\ref{est:jtwo}), (\ref{est:jthree}), (\ref{est:jfour}), (\ref{est:j5}):
\begin{equation}
\label{est:jall}
\begin{aligned}
-\dot{\mathcal J}&\gtrsim \left(\frac{1}{4} -\eps-\eps B\right)\int \sigma^2_B(\partial_r u_1)^2+\left(K^{-3}-\eps^{-1}B^{-2}\right)\|\sigma_B^2 \rho_K^2 w_1\|_{L^2}^2\\
&\quad -\left(B\delta\eps^{-1}+\eps+\eps B+\frac{1}{B^2}+\frac{B}{\eps A}+\eps^{-2} e^{\,-A/B}\right)\|\sigma_A \partial_r w_1\|^2_{L^2}\\
&\quad -\frac{B}{\eps A^3}\|\sigma_Aw_1\|^2_{L^2}\\
-&\quad\left(B\delta+ \eps+\eps B+\frac{\eps}{B^2}+  \frac{B}{A}   \right) \int\frac{\Phi_A}{r}\frac{1}{1+r^2} w_1^2,
\end{aligned}
\end{equation}
and note that (\ref{est:virial12}) implies:
\begin{equation}
\label{est:ih}
-\dot{\mathcal I}+\frac{1}{A^2}  \dot{\mathcal H}+ C_8\|\sigma_B^2\rho_K^2 w_1\|^2_{L^2}\gtrsim \int \sigma^2_A |\partial_r w_1|^2+\frac{1}{A^2}\int \sigma_A^2 w_1^2+\frac{1}{A^2}\int \sigma_A^2 w_2^2+\int  \frac{\Phi_A}{r} \frac{1}{1+r^2} w_1^2.
\end{equation}
Next, we choose parameters $K, B, \eps, A, \delta$. First we choose $K$ as in (\ref{eq:choice k}). We will need $K\ll B\ll A$. Next we pick $B$ large and $\epsilon$ small such that $\eps B\ll 1$ and 
$\eps^{-1} B^{-2}\ll K^{-3}$. Then we chose $A$ such that $B\eps^{-1} A^{-1}\ll 1$  and $\eps^{-2} e^{\,-A/B}\ll 1$. Finally we chose $\delta$ small such that $B\delta\eps^{-1}\ll 1$ and  $A^2\delta \ll A^{-2}$. With these choices there exists $\eta\ll 1$ independent on $K$ such that 
\[
-\dot{\mathcal J}-\eta\left(-\dot{\mathcal I}+\frac{1}{A^2}  \dot{\mathcal H}\right)\gtrsim K^{-3} \|\sigma_B^2 \rho_K^2 w_1\|_{L^2}^2.
\]
Integrating the above in $t$ and using the fact that 
\[
|\mathcal I(t)|, |\mathcal J(t)|, |\mathcal H(t)|\lesssim A \delta^2,
\]
we infer that for each $T>0$ 
\begin{equation}
\label{eq:intone}
\int_0^T \|\sigma_B^2 \rho_K^2 w_1\|_{L^2}^2\lesssim A\delta^2.
\end{equation}
Integrating now (\ref{est:ih}) in $t$ we get
\begin{equation}
\label{eq:inttwo}
\int_0^T \|\sigma_A \partial_r w_1\|_{L^2}^2+\frac{1}{A^2}\int_0^T \|\sigma_A w_1\|_{L^2}^2+\frac{1}{A^2}\int_0^T \|\sigma_A w_2\|_{L^2}^2+\int_0^T  \int \frac{\Phi_A}{r} \frac{1}{1+r^2} w_1^2\lesssim A\delta^2. 
\end{equation}
The assertion of the Theorem \ref{thm:2} follows now from (\ref{eq:intone}) and (\ref{eq:inttwo}) by a standard argument (c.f. \cite{KM1}). 
\normalsize

\end{document}